\begin{document}

\title{Precise asymptotics for large deviations of integral forms}
\author{Xiangfeng Yang\thanks{xyang2@tulane.edu}\\ Grupo de F\'{i}sica Matem\'{a}tica\\Universidade de Lisboa\\ Av. Prof. Gama Pinto 2, 1649-003 Lisboa, Portugal}

\date{September 13, 2012}

\maketitle

\begin{abstract}
For suitable families of locally infinitely
divisible Markov processes $\{\xi^{{\epsilon}}_t\}_{0\leq t\leq T}$ with frequent small jumps depending on a small parameter $\epsilon>0,$ precise asymptotics for large deviations of integral forms $\mathbb{E}^{\epsilon}\left[\exp\{{\epsilon}^{-1}F(\xi^{\epsilon})\}\right]$ are proved for smooth functionals $F.$ The main ingredient of the proof in this paper is a recent result regarding the asymptotic expansions of the expectations $\mathbb{E}^{\epsilon}\left[G(\xi^{\epsilon})\}\right]$ for smooth $G.$ Several connections between these large deviation asymptotics and partial integro-differential equations are included as well.
\end{abstract}

\renewcommand{\theequation}{\thesection.\arabic{equation}}
\newtheorem{theorem}{Theorem}[section]
\newtheorem{thm}{Theorem}[section]
\newtheorem{cor}[thm]{Corollary}
\newtheorem{lemma}[thm]{Lemma}
\newtheorem{prop}[thm]{Proposition}
\theoremstyle{definition}
\newtheorem{defn}[thm]{Definition}
\theoremstyle{remark}
\newtheorem{remark}[thm]{Remark}
\numberwithin{equation}{section}
\newcommand{\dx}{\Delta x}
\newcommand{\dt}{\Delta t}

\def\keywords{\vspace{.5em}\hspace{-2em}
{\textit{Keywords and phrases}:\,\relax%
}}
\def\endkeywords{\par}

\def\MSC{\vspace{.0em}\hspace{-2em}
{\textit{AMS 2010 subject classifications}:\,\relax%
}}
\def\endMSC{\par}

\keywords{Cr\'{a}mer's transformation, large deviations, normal deviations, locally infinitely
divisible, compensating operators}

\MSC{Primary 60F10, 60F17; secondary 60J75, 35C20}

\section{Introduction}\label{sec:introduction}
The study of large deviations in limit theorems can be formulated as
follows. Let $\mathbb{X}$ be a metric space with metric $\rho$, and
$\mu^{\epsilon}$ be a family of probability measures on $\mathbb{X}$
depending on a parameter $\epsilon>0.$ Suppose there is a point
$x_{*}\in\mathbb{X}$ such that for any $\delta>0$ and small
$\epsilon,$ $\mu^{\epsilon}\{y:\rho(x_{*},y)<\delta\}$ have
overwhelming probabilities:
$\lim_{\epsilon\rightarrow0}\mu^{\epsilon}\{y:\rho(x_{*},y)<\delta\}=1.$
Problems on large deviations are concerned with the limiting
behavior as $\epsilon\rightarrow0$ of the infinitesimal
probabilities $\mu^{\epsilon}(A)$ for measurable sets
$A\subseteq\mathbb{X}$ that are situated at a positive distance from
point $x_{*}.$ Problems concerning asymptotics as
$\epsilon\rightarrow0$ of integrals in the form
$\int_{\mathbb{X}}f_{\epsilon}(x)\mu^{\epsilon}(dx)$ also belong to
large deviations if the main part of such integrals for small
$\epsilon$ is due to the values of $x$ far away from point $x_{*}.$ This
paper deals with the later (asymptotics of integrals).

Classical large deviation problems are about empirical means $\bar{S}_n=\sum_{i=1}^n\xi_i/n$ of random variables $\xi_i.$ In general, results obtained deal with
asymtotics up to logarithmic equivalence $\ln \mathbb{P}\{\bar{S}_n\in A\}$ or $\ln \mathbb{E}\exp\{nf(\bar{S}_n)\},$ and we call these results \emph{rough large deviations}, see \cite{Wentzell-LD-Markov-Processes-1986}, \cite{Dembo-Zei}, \cite{Dupuis-Ellis-1997} and \cite{Yang-SPL-2012}. If we assume that the random variables $\xi_i$ are real-valued and independent identically distributed (i.i.d.), Cram\'{e}r in \cite{Cramer-1938} made use of limit theorems on normal
deviations (asymptotic expansions in limit theorem for i.i.d. random
variables) and proved a precise large deviation result:
$\mathbb{P}\{\bar{S}_n>a\}\sim\frac{c}{\sqrt{n}}\exp\{-nI(a)\}$
for $a>0,$ some constant $c$ and a rate function $I(x)$ provided $\xi_i$ are non-lattice having zero mean and finite moment generating
function. He used what we will call Cram\'{e}r's transformation to
define a new distribution $\tilde{\mu}(dx)=e^{z_0x}\mu(dx)/\int
e^{z_0x}\mu(dx)$ for some $z_0$ so that new random variables
$\tilde{\xi}_i$ corresponding to $\tilde{\mu}$ have mean $a.$ If
more conditions are assumed on $\xi_i,$ then Cram\'{e}r derived precise asymptotics for large deviation probabilities
$\mathbb{P}\{\bar{S}_n>a\}=\exp\{-nI(a)\}(\sum_{1\leq i\leq
N}l_in^{-i/2}+o(n^{-N/2}))$ for an integer $N$ depending on the moments of $\xi_i$ (see also \cite{Bhattacharya-Rao-1976}, \cite{Iltis-1995} and the references therein for related works). If we use $\mu^n$
to denote the distributions of $\bar{S}_n,$ then
results concerning integrals $\int_{\mathbb{R}}f_n(x)\mu^n(dx)$ with
$f_n(x)=\exp\{nf(x)\}$ can be obtained similarly in the form
$\int_{\mathbb{R}}\exp\{nf(x)\}\mu^n(dx)=\exp\{n[f(x_0)-I(x_0)]\}(\sum_{0\leq
i\leq M}k_in^{-i}+o(n^{-M}))$ provided $\max[f(x)-I(x)]$ is reached
uniquely at $x_0$ for some integer $M$ depending on the smoothness of $f(x).$ If $\xi_i$ are not independent or the moment generating
function doesn't exist, similar precise large deviations can be also obtained (see for instance \cite{Nagaev-1969}, \cite{Nagaev-1979}, \cite{Mikosch-Wintenberger-2012} and the references therein). For related treatments on other types of sequences of random variables (such as randomly indexed sums), we refer to \cite{Mikosch-Nagaev-1998} and \cite{Ng-Tang-Yan-Yang-2004}. Precise large deviations are also called in the literature as sharp (or exact) large deviations.

When we study large deviations for stochastic processes
$\xi_t^{\epsilon}$ defined on probability spaces
$(\Omega,\mathcal{F},\mathbb{P}^{\epsilon}),$ one usually investigates the asymtotics up to logarithmic equivalence $\ln
\mathbb{P}^{\epsilon}\{\xi^{\epsilon}\in A\}\sim g(\epsilon,A)$ or $\ln
\mathbb{E}^{\epsilon}f_{\epsilon}(\xi^{\epsilon})\sim
g(\epsilon,f_{\epsilon})$ (we use $\mathbb{E}^{\epsilon}$ to denote the expectation with respect to the probability measure $\mathbb{P}^{\epsilon}$). More precisely, if $(\mathbb{X},\mathcal{B})$ denotes a function space with a metric and the Borel $\sigma$-algebra $\mathcal{B},$ then the family $\{\xi^{\epsilon}\}$ is said to satisfy the \textit{large deviation principle} with a normalized action functional $S(x)$ on $(\mathbb{X},\mathcal{B})$ if for every Borel measurable set $\Gamma\in\mathcal{B},$
\begin{equation}\label{eq:LDP-definition}
-\inf_{x\in\Gamma^o}S(x)\leq \liminf_{\epsilon\rightarrow0}\epsilon\ln \mathbb{P}^{\epsilon}\{\xi^\epsilon\in \Gamma\}\leq \limsup_{\epsilon\rightarrow0}\epsilon\ln \mathbb{P}^{\epsilon}\{\xi^\epsilon\in \Gamma\}\leq -\inf_{x\in\bar{\Gamma}}S(x)
\end{equation}
where $S(x)$ takes values in $[0,+\infty]$ such that each level set $\Phi(s):=\{x\in \mathbb{X}: S(x)\leq s\}$ is compact ($s\geq0$). The normalized action functional $S(x)$ is also called a rate function in the literature. Here we also consider the large deviation principle as rough large deviations. We refer to \cite{Dembo-Zei}, \cite{Wentzell-LD-Markov-Processes-1986}, \cite{Varadhan-1969}, \cite{Mogulskii-1993}, \cite{Feng-Kurtz-2006}, \cite{Acosta-1994} and \cite{Knessl-1985} for the large deviation principles for various classes of stochastic processes. Of note, references \cite{Acosta-1994} and \cite{Knessl-1985} study processes with jumps, which will be included in this paper.

The following identity, to be called as \textit{Varadhan's integral lemma} according to \cite{Dembo-Zei}, was derived in \cite{Varadhan-1969} from (\ref{eq:LDP-definition})
\begin{equation}\label{eq:Varadhan-integral-lemma}
\lim_{\epsilon\rightarrow0}\epsilon\ln \mathbb{E}^{\epsilon}\exp\left\{\epsilon^{-1}F(\xi^{\epsilon})\right\}=\max_{x\in\mathbb{X}}[F(x)-S(x)]
\end{equation}
for every bounded and continuous functional $F(x)$ on $\mathbb{X}.$ If the metric space $\mathbb{X}$ is regular enough, then (\ref{eq:Varadhan-integral-lemma}) and (\ref{eq:LDP-definition}) are equivalent, see Section 3.3 in \cite{Freidlin-Wentzell-1998}, \cite{Bryc-1990} and \cite{Dembo-Zei}. Related works were considered in \cite{Donsker-Varadhan-I-IV-1975-1983}. Two questions arise here. First, it is natural to expect precise large deviation probabilities from (\ref{eq:LDP-definition}) for suitable stochastic processes. This direction has been extensively studied, such as for random walks, actual aggregate loss processes, prospective-loss processes, (fractional) Ornstein-Uhlenbeck processes, Gaussian quadratic forms, Markov chains and so on (see \cite{Shen-Lin-Zhang-2009}, \cite{Ng-Tang-Yan-Yang-2003}, \cite{Pisztora-Povel-Zeitouni-1999}, \cite{Arous-Deuschel-Stroock-1993}, \cite{Bercu-Gamboa-Lavielle-2000}, \cite{Bercu-Coutin-Savy-2011}, \cite{Bercu-Coutin-Savy-2012}, \cite{Fatalov-2006} and \cite{Fatalov-2011}). Second, it is natural to expect precise large deviations of integral forms from (\ref{eq:Varadhan-integral-lemma}) for more regular $F$ such as what
we had for sums of i.i.d. random variables. Namely, we want to specify the conditions on $F$ and $\xi^{\epsilon}$ under which $\mathbb{E}^{\epsilon}\exp\left\{\epsilon^{-1}F(\xi^{\epsilon})\right\}$ has precise asymptotics. Not many references can be found along this direction, and below is a summary.

Indeed, for the family of stochastic processes $\{\sqrt{\epsilon}W_t\}_{t\in[0,T]},$ where
$\{W_t\}$ is the standard Wiener process, it was proved by Schilder in \cite{Schilder} that the following precise asymptotics hold
\begin{equation}\label{eq:PLD-Wiener-processes}
\mathbb{E}^{\epsilon}\left[\exp\left\{{\epsilon}^{-1}F(\sqrt{\epsilon}W)\right\}\right]=\exp\{{\epsilon}^{-1}[F(\phi_0)-S(\phi_0)]\}\left(\sum_{0\leq
i\leq s/2}K_i\cdot
{\epsilon}^i+o({\epsilon}^{s/2})\right)
\end{equation}
for a positive integer $s$ depending on the smoothness of $F,$ where the normalized action functional
$S(\phi)=\frac{1}{2}\int_0^T\phi'(t)^2dt$ for absolutely continuous
$\phi$ and $S(\phi)=\infty$ for other $\phi.$ We note that the trajectory metric space $\mathbb{X}$ here is the continuous function space $C[0,T],$ and $F(\phi_0)-S(\phi_0)$ indicates the maximum of $F(\phi)-S(\phi)$ is reached uniquely at $\phi_0.$ The proof of (\ref{eq:PLD-Wiener-processes}) made use of many particular properties of Wiener processes such as $d\mu_{\phi+\sqrt{\epsilon}W}/d\mu_{W}$ and the distribution of $\max_{t\in[0,T]}|W_t|.$ In 1970s, Dubrovskii and Wentzell showed precise large deviation of the first order, i.e. $s=0$ in (\ref{eq:PLD-Wiener-processes}), for suitable Markov processes by a transformation similar to Cram\'{e}r's. The general precise asymptotics can't be obtained due to the lack of tools which will be explained below, see \cite{Dubrovskii} and \cite{Wentzell-LD-Markov-Processes-1986}. Ellis and Rosen in 1980s derived precise asymptotics in the form (\ref{eq:PLD-Wiener-processes}) for Gaussian probability measures by suitable technical arguments based on Hilbert spaces, see \cite{Ellis-Rosen-1980}, \cite{Ellis-Rosen-I-1982}, \cite{Ellis-Rosen-II-1982}, \cite{Piterbarg-Fatalov-1995} and \cite{Fatalov-2012}. The purpose of this paper is to study precise asymptotics for large deviations in form (\ref{eq:PLD-Wiener-processes}) for a wide class of families of stochastic processes including diffusion processes, pure jump processes, deterministic processes and the mixture processes of them - locally infinitely divisible processes.

Most of the proofs regarding the precise large deviations mentioned above made use of a transformation
$\tilde{\mathbb{P}}^{\epsilon}(A)=\int_A\pi_{\phi}^{\epsilon}d\mathbb{P}$
($\pi_{\phi}^{\epsilon}$ is chosen such that
$\tilde{\mathbb{P}}^{\epsilon}(\Omega)=1$) in order that the main part of
$\mathbb{E}^{\epsilon}\left[\exp\{{\epsilon}^{-1}F(\xi^{\epsilon})\}\right]$
for small $\epsilon$ is due to the set of paths in a neighborhood of
$\phi$ which has large $\tilde{\mathbb{P}}^{\epsilon}$ probability. We call
such a transformation the \emph{generalized} Cram\'{e}r's
transformation which will be used also in this paper.

Back to the classical precise asymptotics for large deviations $\int_{\mathbb{R}}\exp\{nf(x)\}\mu^n(dx)$ on i.i.d. random variables, Cram\'{e}r's main tools are the asymptotic expansions on normal deviations for $\sqrt{n}\bar{S}_n$ in the form $F_{\sqrt{n}\bar{S}_n}(x)=F_{\infty}(x)+\sum_{i=1}^kP_i(x)n^{-i/2}+o(n^{-k/2})$ where $F_{\infty}(x)$ is the limiting distribution of $F_{\sqrt{n}\bar{S}_n}(x)$ (the distribution function of the random variable $\sqrt{n}\bar{S}_n$). Equivalently, for smooth function $g(x),$ the normal deviations take the following form
\begin{equation}\label{eq:normal-deviation-classical}
\mathbb{E}g(\sqrt{n}\bar{S}_n)=\mathbb{E}g(Y)+\sum_{i=1}^kp_in^{-i/2}+o(n^{-k/2})
\end{equation}
where $Y$ is the random variable corresponding to the distribution function $F_{\infty}(x).$ It is well-known that a family of stochastic processes $\eta^{\epsilon}$ converges weakly to a process $\eta$ as probability measures on the trajectory function space $\mathbb{X}$ if for any continuous and bounded functional $G(x)$ on $\mathbb{X}$
\begin{equation*}
\mathbb{E}^{\epsilon}G(\eta^{\epsilon})=\mathbb{E}G(\eta)+o(1).
\end{equation*}
The exact order for $o(1)$ is generally unknown. Thus, if one wants to follow the idea of Cram\'{e}r on random variables to derive precise asymptotics of large deviations for stochastic processes by using the asymptotic expansions on normal deviations for stochastic processes, then the first step would be to obtain normal deviations for stochastic processes, namely,
\begin{equation}\label{eq:normal-deviation-processes}
\mathbb{E}^{\epsilon}G(\eta^{\epsilon})=\mathbb{E}G(\eta)+\sum_{i=1}^kP_i\epsilon^{i/2}+o(\epsilon^{k/2}).
\end{equation}
But normal deviations (\ref{eq:normal-deviation-processes}) are far from clear until a recent result \cite{Yang-SPA-2012}, and we refer to \cite{Wentzell-Asymptotic-I-III} and the references therein for closely related works. The method of deriving precise asymptotics of large deviations from precise normal deviations for stochastic processes seems to appear for the first time in this paper.

In Section \ref{subsec:locally-infinitely-divisible} we give the definition of a locally infinitely divisible process and list several related concepts. The main result of this paper is contained in Section \ref{sec:main-theorem}, where some examples are also included. After appropriate recall from \cite{Yang-SPA-2012} on normal deviations for stochastic processes in Section \ref{subsec:normal-deviation}, we present the proof of our main theorem in the rest of Section \ref{sec:proof}.

As related problems, in Section \ref{sec:Math-Physics} we study the connections between precise asymptotics for large (or normal) deviations and for the solutions to partial integro-differential equations
\begin{equation*}
\begin{cases}
\begin{aligned}
\frac{\partial}{\partial t}u^{\epsilon}(t,x)=&\frac{\epsilon}{2}a(t,x)\Delta u^{\epsilon}(t,x)+b(t,x)\nabla u^{\epsilon}(t,x)+\epsilon^{-1}c(x)u^{\epsilon}(t,x)\\
&+\epsilon^{-1}\int_{\mathbb{R}}\left[u^{\epsilon}(t,x+\epsilon u)-u^{\epsilon}(t,x)-\epsilon u\nabla u^{\epsilon}(t,x)\right]\nu_{t,x}(du)
\end{aligned}\\
u^{\epsilon}(0,x)=g(x)
\end{cases}
\end{equation*}
under suitable smooth and growth conditions on $a,b,c$ and $g.$ For instance, if $c=1,$ $a(t,x)=a(x),$ $b(t,x)=b(x),$ $0<\inf_{x}a(x)\leq \sup_{x}a(x)<\infty,$ $\nu_{t,x}(du)=u^21_{\{|u|\leq1\}}(du),$ the smooth functions $a(x), b(x)$ and $g(x)$ are bounded together with their derivatives $d^j a/d x^j,$ $d^j b/d x^j$ and $d^j g/d x^j,$ then the precise asymptotics for the solution $u^{\epsilon}(t,x)$ for fixed $(t,x)$ is (with $n$ being an arbitrary integer)
$$u^{\epsilon}(t,x)=e^{t/{\epsilon}}\cdot\left[\sum_{k=0}^nk_i(x) \epsilon^{k/2}+o(\epsilon^{n/2})\right],\quad \text{ for constants }k_i \text{ depending on }x.$$

\subsection{Locally infinitely divisible processes}\label{subsec:locally-infinitely-divisible}
If $(\xi_t, \mathbb{P}_{s,x}),t\in[s,T],$ is a real-valued Markov process (the
subscript $_{s,x}$ means the process starts from $x$ at time $s$),
we use $P^{s,t},0\leq s\leq t\leq T,$ to denote the corresponding
multiplicative family of linear operators acting on functions
according to the formula
$$P^{s,t}f(x)=\mathbb{E}_{s,x}f(\xi_t),$$
where $\mathbb{E}_{s,x}$ is the expectation with respect to probability
measure $\mathbb{P}_{s,x}.$ The \emph{compensating operator} $\mathfrak{A}$
of this Markov process, taking functions $f(t,x)$ to functions of
the same two arguments, is defined by
\begin{align}\label{compensating-op-definition}
P^{s,t}f(t,\cdot)(x)=f(s,x)+\int_s^tP^{s,u}\mathfrak{A}f(u,\cdot)(x)du
\end{align}
under suitable assumptions on the measurability in $(t,x)$ of
$\mathfrak{A}f(t,x),$ where $P^{s,t}f(t,\cdot)(x)$ means that
$P^{s,t}$ is applied to the function $f(t,x)$ in its second argument
$x,$ and $P^{s,u}\mathfrak{A}f(u,\cdot)(x)$ means that $P^{s,u}$ is
applied to function $g(u,x):=\mathfrak{A}f(u,x)$ in its second
argument $x.$ If some measurability conditions are imposed on the
process $\xi_t(\omega),$ then (\ref{compensating-op-definition}) is
equivalent to that
$$f(t,\xi_t)-\int_s^t\mathfrak{A}f(u,\xi_u)du$$
is a martingale with respect to the natural family of
$\sigma$-algebras and every probability measure $P_{s,x}.$ Of
course, compensating operator $\mathfrak{A}$ is not defined
uniquely. Different versions are such that $\mathfrak{A}f(u,\xi_u)$
coincide almost surely except on a set of time argument $u$ of zero
Lebesgue measure.

We say $A_t$ is the generating operator of our process $(\xi_t,\mathbb{P}_{s,x})$ if for $s\leq t,$
$$P^{s,t}f(x)=f(x)+\int_s^tP^{s,u}A_uf(x)du$$
for suitable $f.$ Also a generating operator has different versions. For a wide class of Markov processes, a version of the compensating
operator $\mathfrak{A}$ of process $\xi_t$ for smooth functions
$f(t,x)$ is given by
$$\mathfrak{A}f(t,x)=\frac{\partial f}{\partial t}(t,x)+A_tf(t,\cdot)(x),$$
where generating operator $A_t$ acts on
functions of the spatial argument $x$ only.

For each fixed ${\epsilon}>0,$ let
$(\xi_t^{\epsilon}, \mathbb{P}_{0,x}^{\epsilon}),t\in[0,T],$ be a
one-dimensional process with jumps whose trajectories are right
continuous with left limits. We assume that the generating operator
of $\xi_t^{\epsilon}$ is
\begin{align}\label{infi-divi-section-1-2}
A_t^{\epsilon}f(x)={\epsilon}^{-1}\int_{\mathbb{R}}\left[f(x+{\epsilon}u)-f(x)-{\epsilon}uf'(x)\right]\nu_{t,x}(du)+\alpha(t,x)f'(x)+\frac{\epsilon}{2}a(t,x)f''(x)
\end{align}
for functions $f$ that are bounded and continuous together with their
first and second derivatives, and that a version of its compensating
operator is given by $\mathfrak{A}f(t,x)=\frac{\partial f}{\partial
t}(t,x)+A_tf(t,\cdot)(x)$ for bounded functions $f(t,x)$ that are
absolutely continuous in $t,$ twice continuously differentiable in
$x$ for fixed $t$ with bounded derivatives $\partial
f/\partial t, \partial f /\partial x$ and $\partial^2
f /\partial x^2.$ In order to make sense of the integral in
$A_t^{\epsilon}f(x)$ and also for the purpose of the proof, throughout this paper we impose two conditions on measures
$\nu_{t,x}:\,\int u^2\nu_{t,x}(du)<\infty$ for every $(t,x),$ and there is a
bounded support $K$ for all $\nu_{t,x}(\cdot),$ i.e.,
$\nu_{t,x}(K^c)\equiv0.$ The family $\{\xi_t^{\epsilon}\}$ is the underlying family of stochastic processes in this paper, and we call them \textit{locally infinitely divisible processes} with bounded support, see also \cite{Wentzell-Asymptotic-I-III} and \cite{Wentzell-LD-Markov-Processes-1986}. We note that this family contains diffusion processes, pure jump processes, deterministic processes and the processes coming from the mixture of them.

For each $\epsilon,$ the process $\xi^{\epsilon}$ makes jumps of
size $\epsilon\cdot u,$ according to the rate measure
${\epsilon}^{-1}\nu_{t,x}(du),$ and moves with velocity
$\alpha(t,x)-\int u\nu_{t,x}(du)$ between the jumps. We define
the cumulant $G^{\epsilon}(t,x;z)$ of
$(\xi_t^{\epsilon},\mathbb{P}_{0,x}^{\epsilon})$ by, for $t\in[0,T], x,z\in
\mathbb{R},$
$$G^{\epsilon}(t,x;z)=z\alpha(t,x)+\frac{\epsilon}{2}a(t,x)z^2+{\epsilon}^{-1}\int_{\mathbb{R}}\left(e^{z{\epsilon}u}-1-z{\epsilon}u\right)\nu_{t,x}(du).$$
Here $G^{\epsilon}(t,x;z)$ is well defined because of two conditions
we imposed on $\nu_{t,x},$ and it satisfies
$G^{\epsilon}(t,x;z)={\epsilon}^{-1}G_0(t,x;\epsilon z),$ where
$$G_0(t,x;z)=z\alpha(t,x)+\frac{1}{2}a(t,x)z^2+\int_{\mathbb{R}}\left(e^{zu}-1-zu\right)\nu_{t,x}(du).$$
Let $H_0(t,x;u),G_0(t,x;z)$ be coupled by the Legendre
transformation in the third argument,
$$H_0(t,x;u)=\sup_{z\in \mathbb{R}}\left[zu-G_0(t,x;z)\right].$$
For an absolutely continuous function $\phi_0$ (which will be
specified later as a maximizer) we define
$z^{\epsilon}(t)={\epsilon}^{-1}z_0(t), z_0(t)=\frac{\partial
H_0}{\partial u}(t,\phi_0(t);\phi'_0(t))$ and generalized
Cram\'{e}r's transformation:
$$\mathbb{P}_{0,x}^{z^{\epsilon}}(A):=\int_A\pi^{\epsilon}(0,T)d\mathbb{P}_{0,x}^{\epsilon},$$
with
$\pi^{\epsilon}(0,T)=\exp\left\{{\epsilon}^{-1}\int_0^Tz_0(t)d\xi_t^{\epsilon}-{\epsilon}^{-1}\int_0^TG_0(t,\xi_t^{\epsilon};z_0(t))dt\right\}.$
For each ${\epsilon}>0,$ this transformation gives us a new
probability measure $\mathbb{P}_{0,x}^{z^{\epsilon}}$ if we assume
$\pi^{\epsilon}(0,T)$ is a martingale as a process in $T$ with
respect to $\mathbb{P}_{0,x}^{\epsilon}.$ For
each ${\epsilon}>0,$ under $\mathbb{P}_{0,x}^{z^{\epsilon}}$ it turns out
$\xi^{\epsilon}$ is again a jump process with compensating operator
(see \cite{Wentzell-LD-Markov-Processes-1986}, Section 2.2.2),
\begin{equation}\label{sep-21-1}
\begin{aligned}
\mathfrak{A}^{z^{\epsilon}}f(t,x)=&\frac{\partial f}{\partial t}(t,x)+\frac{\partial G_0}{\partial z}(t,x;z_0(t))\frac{\partial f}{\partial x}(t,x)+\frac{\epsilon}{2}a(t,x)\frac{\partial^2 f}{\partial x^2}(t,x\\
&+{\epsilon}^{-1}\int_{\mathbb{R}}\left[f(t,x+{\epsilon}u)-f(t,x)-\epsilon u\frac{\partial f}{\partial x}(t,x)\right]e^{z_0(t)u}\nu_{t,x}(du).
\end{aligned}
\end{equation}

Let us define the normalized action functional as follows
$$S(\phi)=S_{0,T}(\phi)=\int_0^TH_0(t,\phi(t);\phi'(t))dt$$
for absolutely continuous function $\phi,$ otherwise $S(\phi)=+\infty.$ At the end of this section, we introduce several notations which are needed for our formulation of the main theorem. Let $\phi_0$ be continuously differentiable,
\begin{align*}
G_0^{*}(t,x;z)=&z\left[\alpha(t,\phi_0(t)+x)-\int_{\mathbb{R}}u\nu_{t,\phi_0(t)+x}(du)-\phi'_0(t)\right]\\
&+\frac{1}{2}a(t,\phi_0(t)+x)z^2+\int_{\mathbb{R}}\left(e^{zu}-1\right)e^{z_0(t)u}\nu_{t,\phi_0(t)+x}(du),
\end{align*}
and $H_0^{*}(t,x;u)$ be the Legendre transformation of
$G_0^{*}(t,x;z)$ in the third argument. For simplicity, throughout this paper we will only consider
$\mathbb{P}_{0,0}^{\epsilon},\mathbb{P}_{0,0}^{z^{\epsilon}},$ and use symbols
$\mathbb{P}^{\epsilon}, \mathbb{P}^{z^{\epsilon}}$ for short. The following symbols
are also used,
\begin{equation}\label{alphas-section-1-2}
\begin{aligned}
\alpha^1(t,x)=\frac{\partial G_0}{\partial z}(t,x;z_0(t)),\quad& \alpha^2(t,x)=a(t,x)+\int u^2e^{z_0(t)u}\nu_{t,x}(du);\\
\alpha^j(t,x)=\int
u^je^{z_0(t)u}\nu_{t,x}(du),\quad&\beta^j(t,x)=\int|u|^je^{z_0(t)u}\nu_{t,x}(du),j\geq3.
\end{aligned}
\end{equation}

\subsection{Functional derivatives}\label{subsec:functional-derivatives}
Let us include in this section the function spaces related to the trajectory spaces of our stochastic processes and the corresponding functional derivatives. We use $D_0[0,T]$ to denote the space of all functions defined on
$[0,T]$ vanishing at $0$ which are right continuous with left
limits; $C_0^1[0,T]$ the space of all continuously differentiable
functions on $[0,T]$ vanishing at $0;$ and $W_0^{1,2}[0,T]$ the
space of absolutely continuous functions vanishing at $0$ having
square integrable derivatives. In the space $D_0[0,T],$ the uniform norm
$||\phi||=\sup_{0\leq t\leq T}|\phi(t)|$ will be used. Throughout
this paper, we understand the differentiability of a
functional $F(\phi)$ on $D_0[0,T]$ as Fr\'{e}chet differentiability. Moreover, we assume that the derivatives
$F^{(j)}(\phi)(\delta_1,\cdots,\delta_j)$ can be represented as
integrals of the product
$\delta_1(s_1)\cdots\delta_j(s_j)$ with respect to some
signed measures, denoted by $F^{(j)}(\phi;\bullet)$:
\begin{align}\label{condition-derivative}
F^{(j)}(\phi)(\delta_1,\cdots,\delta_j)=\int_{[0,T]^j}\delta_1(s_1)\cdots\delta_j(s_j)F^{(j)}(\phi;ds_1\cdots
ds_j).
\end{align}
The norm of the signed measure is defined by
$$||F^{(j)}||:=\sup_{x[0,T]\in
D_0[0,T]}\left|F^{(j)}(x[0,T];\bullet)\right|([0,T]^j).$$
The notation $F^{(j)}(\phi)(y[0,T]^{\otimes^j})$ stands for the $j$-th
derivative $F^{(j)}(\phi)(y[0,T],\cdots,y[0,T])$ of the functional $F$
at point $\phi[0,T]$ in directions $y[0,T].$

\section{The main theorem and examples}\label{sec:main-theorem}
From now on, an integer $s\geq2$ will be used. To precisely state our main result, we make a list of assumptions on two pairs $G_0, H_0$ and $G_0^{*}, H_0^{*}$ introduced in Section \ref{subsec:locally-infinitely-divisible}. The first five general assumptions (A)-(E) can be found in \cite{Wentzell-LD-Markov-Processes-1986}. Let $p(t,x;z)$ and $q(t,x;u)$ be coupled by the Legendre transformation in the third arguments.

\textbf{(A)}. $p(t,x;z)\leq \overline{p}_0(z)$ for all
$t,x,z,$ where $\overline{p}_0$ is a downward convex non-negative
function, finite for all $z,$ and such that $\overline{p}_0(0)=0.$

\textbf{(B)}. $q(t,x;u)<\infty$ for the same $u$ for which
$\underline{q}_0(u)$ is finite, where $\underline{q}_0$ is the
Legendre transformation of $\overline{p}_0.$

\textbf{(C)}.
$$\mathop{\text{sup}}_{|t-s|<h,|x-y|<\delta,q(t,x;u)<\infty}\frac{q(s,y;u)-q(t,x;u)}{1+q(t,x;u)}\rightarrow0\quad\text{ for }{h}\downarrow0,\delta\downarrow0.$$

\textbf{(D)}. The set $\{u:\underline{q}_0(u)<\infty\}$ is open, and
$\sup_{t,x}q(t,x;u_0)<\infty$ for some point $u_0$ of it.

\textbf{(E)}. For any compactum
$U_K\subseteq\{u:\underline{q}_0(u)<\infty\},$ the partial derivative
$\frac{\partial q}{\partial u}(t,x;u)$ is bounded and
continuous in $u\in U_K$ uniformly with respect to all $t,x.$

Besides, one more technical assumption is imposed directly on $G_0$ and $\alpha^j.$

\textbf{(F)}. Let $G_0(t,x;z)$ and $H_0(t,x;u)$ be twice
differentiable with respect to $(x;z)$ and $(x;u)$ respectively, and
their first and second derivatives be continuous with respect to
$(t,x;z)$ and $(t,x;u).$ Furthermore, assume $G_0(t,x;z)$ is
differentiable in $x$ up to $s+1$ times,
$\frac{\partial^{s+1}G_0}{\partial x^{s+1}}(t,x;z)$ is bounded for
all $x\in \mathbb{R},t\in[0,T]$ and bounded $z.$ Suppose $\sup_{t,x,z}|\frac{\partial^2 G_0}{\partial z\partial
x}(t,x;z)|<\infty,$ $\inf_{t,x,z}|\frac{\partial^2 G_0}{\partial z^2}(t,x;z)|>0,$ $\sup_{t,x}|a(t,x)|<\infty$, and $||\alpha^1_{(i+1)}||+||\alpha^{j+1}_{(i)}||<\infty$ for
all integers $i,j\geq 1$ where the subscript $_{(i)}$ in
$\alpha^j_{(i)}(t,x):=\alpha^j_{\underbrace{22\cdots2}_{i}}(t,x)$
means the $i$-th partial derivative of $\alpha^j$ in its second
argument $x.$

\subsection{The main theorem}\label{subsec:main-theorem}
\begin{theorem}\label{main-thm} Let
$(\xi_t^{\epsilon},\mathbb{P}^{\epsilon}),t\in[0,T],$ be a family of
one-dimensional locally infinitely divisible processes with bounded support introduced in Section \ref{subsec:locally-infinitely-divisible}, For a continuous functional $F$ on $D_0[0,T]$ which is bounded above, let the maximum
of functional $F-S$ be attained at a unique function $\phi_0\in
C_0^1[0,T],$ and the assumption (F) be fulfilled. Furthermore, assume that the assumptions (A)-(E) are satisfied for $p(t,x;z)=G_0(t,x;z)$ and $p(t,x;z)=G_0^{*}(t,x;z).$

Suppose $F$ is $s+1$ times differentiable at all points $\phi$ in a
neighborhood of $\phi_0$ and $F^{(2)}(\phi_0)(x,x)<S^{(2)}(\phi_0)(x,x)$ for any non-zero
function $x\in W_0^{1,2}[0,T].$ For all $\phi$ in this neighborhood of $\phi_0,$ any $x[0,T],x_i[0,T]\in
D_0[0,T],$ we assume
\begin{equation}\label{condition-on-F}
\begin{aligned}
&F^{(2)}(\phi)(x[0,T],x[0,T])+\int_0^T(x(t))^2\frac{\partial^2
G_0}{\partial x^2}(t,\phi_0(t);z_0(t))dt\leq 0,\\
&\left|F^{(i)}(\phi)(x_1[0,T],\cdots,x_i[0,T])\right|\leq
p\left[\left|x_1(T)\cdots
x_i(T)\right|^m+\prod_{j=1}^{i}\left(1+\int_0^T\left|x_j(t)\right|^{n}dt\right)\right]
\end{aligned}
\end{equation}
with $2\leq i\leq s+1$ and some constants $m,n,p\geq 1.$ Another continuous and bounded functional $H$ is assumed to be $s-1$ times differentiable at all points
$\phi$ in a neighborhood of $\phi_0,$ and
\begin{align}\label{condition-on-H}
\left|H^{(i)}(\phi)(x_1[0,T],\cdots,x_i[0,T])\right|\leq
p'\left[\left|x_1(T)\cdots
x_i(T)\right|^{m'}+\prod_{j=1}^{i}\left(1+\int_0^T\left|x_j(t)\right|^{n'}dt\right)\right]
\end{align}
with $1\leq i\leq s-1$ and some positive constants $m',n',p'\geq 1,$ for all $x_i[0,T]\in
D_0[0,T].$

Then as $\epsilon\rightarrow0,$ the following precise asymptotics hold
\begin{align}\label{final-formula}
\mathbb{E}^{\epsilon}\left[H(\xi^{\epsilon})\exp\{{\epsilon}^{-1}F(\xi^{\epsilon})\}\right]=\exp\left\{{\epsilon}^{-1}\left[F(\phi_0)-S(\phi_0)\right]\right\}\left[\sum_{0\leq
i\leq (s-2)}K_i\cdot
{\epsilon}^{i/2}+o\left({\epsilon}^{(s-2)/2}\right)\right]
\end{align}
where the coefficients $K_i$ are determined by $F,H$ and their derivatives at $\phi_0;$ in particular,
\begin{align*}
&K_0=H(\phi_0)\cdot \mathbb{E}\left(\exp\{Q(2,\eta)\}\right),\\
&K_1=C_{01}H(\phi_0)+C_{11}+\mathbb{E}\left[\exp\{Q(2,\eta)\}\left(Q(3,\eta)H(\phi_0)+H^{(1)}(\phi_0)(\eta)\right)\right]
\end{align*}
with the constants $C_{ij}$ depending on $F,H$ and $\phi_0,$ the process $\eta$ being a Gaussian diffusion having diffusion coefficient $A(t)=\frac{\partial^2G_0}{\partial
z^2}(t,\phi_0(t);z_0(t))$ and drift coefficient
$B(t,x)=x\cdot\frac{\partial^2G_0}{\partial z
\partial x}\left(t,\phi_0(t);z_0(t)\right),$ and
$$Q(n,x[0,T])=\frac{1}{n!}F^{(n)}(\phi_0)(x[0,T],\cdots,x[0,T])+\int_0^T\frac{1}{n!}(x(t))^n\frac{\partial^n
G_0}{\partial x^n}(t,\phi_0(t);z_0(t))dt.$$
\end{theorem}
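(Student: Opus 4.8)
The plan is to combine the generalized Cram\'er transformation with the normal deviation expansion (\ref{eq:normal-deviation-processes}) recalled from \cite{Yang-SPA-2012}. First I would rewrite the target expectation under the tilted measure $\mathbb{P}^{z^{\epsilon}}$: since $d\mathbb{P}^{\epsilon}=\pi^{\epsilon}(0,T)^{-1}\,d\mathbb{P}^{z^{\epsilon}}$, one has
$$\mathbb{E}^{\epsilon}\left[H(\xi^{\epsilon})e^{\epsilon^{-1}F(\xi^{\epsilon})}\right]=\mathbb{E}^{z^{\epsilon}}\left[H(\xi^{\epsilon})\exp\left\{\epsilon^{-1}F(\xi^{\epsilon})-\epsilon^{-1}\int_0^T z_0(t)\,d\xi_t^{\epsilon}+\epsilon^{-1}\int_0^T G_0(t,\xi_t^{\epsilon};z_0(t))\,dt\right\}\right].$$
Under $\mathbb{P}^{z^{\epsilon}}$ the process $\xi^{\epsilon}$ obeys a law of large numbers concentrating on $\phi_0$ and a central limit theorem for the rescaled fluctuation $\eta^{\epsilon}:=\epsilon^{-1/2}(\xi^{\epsilon}-\phi_0)$, whose limit is the Gaussian diffusion $\eta$ with coefficients $A,B$; this is precisely what imposing (A)--(E) on the tilted-and-centered cumulant $G_0^{*}$ secures, and it is what makes (\ref{eq:normal-deviation-processes}) applicable to functionals of $\eta^{\epsilon}$.

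The heart of the argument is a simultaneous Fr\'echet expansion in $\sqrt{\epsilon}$. Writing $\xi^{\epsilon}=\phi_0+\sqrt{\epsilon}\,\eta^{\epsilon}$ I would expand $F(\phi_0+\sqrt{\epsilon}\eta^{\epsilon})$ to order $s+1$ and expand the exponent coming from $\pi^{\epsilon}(0,T)^{-1}$ to the same order. The $O(\epsilon^{-1})$ terms reassemble into $F(\phi_0)-S(\phi_0)$ via the Legendre duality between $G_0$ and $H_0$ and the definition $z_0(t)=\partial H_0/\partial u(t,\phi_0(t);\phi_0'(t))$, since $\int_0^T[z_0(t)\phi_0'(t)-G_0(t,\phi_0(t);z_0(t))]\,dt=S(\phi_0)$; the $O(\epsilon^{-1/2})$ terms cancel, which is exactly the stationarity condition characterizing $\phi_0$ as the unique maximizer of $F-S$. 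The surviving $O(1)$ quadratic part is $Q(2,\eta^{\epsilon})$, while higher orders contribute $\sqrt{\epsilon}\,Q(3,\eta^{\epsilon})+\cdots$. After factoring out $\exp\{\epsilon^{-1}[F(\phi_0)-S(\phi_0)]\}$ and expanding both $\exp\{\sqrt{\epsilon}Q(3,\eta^{\epsilon})+\cdots\}$ and $H(\phi_0+\sqrt{\epsilon}\eta^{\epsilon})$ as power series in $\sqrt{\epsilon}$, everything reduces to a finite sum of expectations of the form $\mathbb{E}^{z^{\epsilon}}[P(\eta^{\epsilon})\exp\{Q(2,\eta^{\epsilon})\}]$ with $P$ a polynomial functional. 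Here the first line of (\ref{condition-on-F}), evaluated at $\phi_0$, forces $Q(2,\cdot)\le 0$, so each exponential-quadratic functional is bounded and hence an admissible input to (\ref{eq:normal-deviation-processes}); the strict inequality $F^{(2)}(\phi_0)(x,x)<S^{(2)}(\phi_0)(x,x)$ on nonzero $x\in W_0^{1,2}[0,T]$ makes the form genuinely negative definite, so the limiting Gaussian expectations are finite.

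The final step is to apply the normal deviation expansion to each term, $\mathbb{E}^{z^{\epsilon}}[P(\eta^{\epsilon})\exp\{Q(2,\eta^{\epsilon})\}]=\mathbb{E}[P(\eta)\exp\{Q(2,\eta)\}]+\sum_j c_j\epsilon^{j/2}+o(\cdot)$, and to collect powers of $\epsilon^{1/2}$. Matching the $\epsilon^{0}$ and $\epsilon^{1/2}$ coefficients yields the stated $K_0=H(\phi_0)\mathbb{E}(\exp\{Q(2,\eta)\})$ and $K_1$, with the Gaussian integrals against $\eta$ (diffusion $A$, drift $B$) supplying the constants $C_{ij}$; truncating at order $s+1$ in the Taylor data of $F$ and at order $s-1$ for $H$ is what caps the usable expansion at $\epsilon^{(s-2)/2}$.

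I expect the main obstacle to be localization and remainder control: the Fr\'echet expansion is valid only in a neighborhood of $\phi_0$, so one must show that paths of $\xi^{\epsilon}$ away from $\phi_0$ contribute only $o(\epsilon^{(s-2)/2})$. This needs the large deviation upper bound for the tilted family---guaranteed by (A)--(E) for $G_0$ together with uniqueness of the maximizer---to render the off-$\phi_0$ contribution exponentially small, while the polynomial growth bounds (\ref{condition-on-F}) and (\ref{condition-on-H}) on the higher derivatives of $F$ and $H$ dominate the Taylor remainders uniformly and allow them to be integrated against the (sub-Gaussian) law of $\eta^{\epsilon}$. Calibrating the localizing neighborhood so that it shrinks slowly enough to retain the full asymptotic series yet fast enough to discard the tail is the delicate point of the argument.
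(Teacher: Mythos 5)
Your proposal follows essentially the same route as the paper: localize near $\phi_0$ via the large deviation principle, pass to the tilted measure $\mathbb{P}^{z^{\epsilon}}$, Taylor-expand $F$, $G_0$ and $H$ so that the $\epsilon^{-1}$ terms assemble into $F(\phi_0)-S(\phi_0)$ and the $\epsilon^{-1/2}$ terms cancel by stationarity, and then feed the resulting functionals $P(\eta^{\epsilon})\exp\{Q(2,\eta^{\epsilon})\}$ into the normal-deviation expansion of \cite{Yang-SPA-2012}. The only points you treat more lightly than the paper are the verification that these functionals actually satisfy the hypotheses of that expansion theorem and the finiteness of the higher coefficients $K_i$ (which in the paper requires the moment bound $\mathbb{E}\|\eta\|^j<\infty$ proved via Wentzell's Theorem 2.3.1, not just negative definiteness of $Q(2,\cdot)$), but these are details within the same argument rather than a different approach.
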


\textbf{Remark:} (1). The constants $C_{ij}$ are determined
from the asymptotic expansions for normal deviations formulated in (III) of Section \ref{subsec:Taylor-expansions-estimates}. The finiteness of the coefficients $K_i$ is proved in Section \ref{subsec:finiteness-coefficient}.

(2). In the spacial case when $\xi^{\epsilon}=\sqrt{\epsilon}W$, each coefficient $K_i$ in (\ref{final-formula}) with an odd index $i$ is equal to zero because of the symmetry of the distribution of $W.$

(3). If the initial position of $\xi^{\epsilon}$ is $x$ instead of $0,$ then we think of $\xi^{\epsilon}-x$ as a new process with zero initial position and $$\mathbb{E}^{\epsilon}_{0,x}\Phi(\xi^{\epsilon})=\mathbb{E}^{\epsilon}_{0,0}\Phi(\xi^{\epsilon}-x).$$

\subsection{Examples}\label{subsec:examples}
\textbf{Example 1}. Let us consider a family of
one-dimensional pure jump processes $\xi_t^{\epsilon},t\in[0,1],$
with generating operator given by
$$A_t^{\epsilon}f(x)={\epsilon}^{-1}\int_{\mathbb{R}}\left(f(x+{\epsilon}u)-f(x)\right)\nu_{t,x}(du),$$
where
$\nu_{t,x}(du)=\frac{1}{2}\left(\delta_1(du)+\delta_{-1}(du)\right)$
with $\delta_1(\cdot)$ (resp. $\delta_{-1}(\cdot)$) denoting the
probability measure concentrating at point $1$ (resp. $-1$). For
each $\epsilon,$ the trajectories of $\xi^{\epsilon}$ are step
functions with finitely many steps on $[0,1].$ This process makes
jumps of size $\pm\epsilon$ according to the rate
$\frac{1}{2}{\epsilon}^{-1}.$

The most probable trajectory for $\xi^{\epsilon}$ as
$\epsilon\rightarrow0$ is identically zero. To see this, we first
note that
\begin{align*}
G_0(t,x;z)&=\int_{\mathbb{R}}\left(e^{zu}-1\right)\nu_{t,x}(du)=\frac{1}{2}\left(e^z+e^{-z}-2\right);\\
H_0(t,x;u)&=\sup_{z\in
\mathbb{R}}\left[zu-G_0(t,x;z)\right]=u\ln\left(u+\sqrt{u^2+1}\right)+1-\sqrt{u^2+1};\\
S(\phi)&=\int_0^1H_0(t,\phi(t);\phi'(t))dt,\text{ for absolutely
continuous }\phi\text{ in }D_0[0,1].
\end{align*}
As a function of $u,$ $H_0(t,x;u)$ is positive except at $u=0,$
strictly increasing on $(0,\infty),$ and strictly decreasing on
$(-\infty,0).$ Thus, in order to make $S(\phi)=0,$ it is required
$\phi'(t)=0$ almost everywhere with respect to Lebesgue measure. But
$\phi(t)$ is absolutely continuous and $\phi(0)=0,$ it follows that
$\phi(t)\equiv0.$ This proves that the most probable trajectory is
zero.

Let the functional $F$ on $D_0[0,1]$ be
$$F(\phi)=\int_0^1\left(\phi(t)-\phi^2(t)\right)dt,$$
and $H(\phi)\equiv1.$ We need to show that $\max(F-S)$ is attained
at a unique function $\phi_0\in C_0^1[0,1],$ that is, the following
variational problem
\begin{align}\label{variational-problem}
\max_{\phi\in
C_0^1[0,1]}\int_0^1\left[\phi(t)-\phi(t)^2-\left(\phi'(t)\ln\left(\phi'(t)+\sqrt{\phi'(t)^2+1}\right)+1-\sqrt{\phi'(t)^2+1}\right)\right]dt,
\end{align}
has an unique (nonzero) solution. The existence and uniqueness for a
nonzero solution of (\ref{variational-problem}) are shown in the Appendix. All other conditions of Theorem \ref{main-thm} can be
easily checked.

We could have considered some wider families of processes and more
general functionals (e.g.
$F(\phi[0,T])=h\left(\int_0^Tg\left(\phi\left(s\right)\right)ds\right)$),
and each time we will have to verify the existence and uniqueness of
the solution for the corresponding variational problem.\\
\textbf{Example 2}. Suppose $\xi_t^{\epsilon},t\in[0,1]$ is a family
of one-dimensional pure jump processes with generating operator
$$A_t^{\epsilon}f(x)={\epsilon}^{-1}\int_{\mathbb{R}}\left(f(x+{\epsilon}u)-f(x)\right)\nu_{t,x}(du),$$
where $\nu_{t,x}(du)=r(x)\delta_1(du)+l(x)\delta_{-1}(du)$ and
$r(x)=l(x)=\sin(x)+2.$ It is easy to get
\begin{align*}
G_0(t,x;z)&=\int_{\mathbb{R}}\left(e^{zu}-1\right)\nu_{t,x}(du)=r(x)(e^z-1)+l(x)(e^{-z}-1).
\end{align*}
Again we consider functional
$F(\phi)=\int_0^1\left(\phi(t)-\phi^2(t)\right)dt.$ The existence and uniqueness of the variational problem $\max_{\phi\in C_0^1[0,1]}[F(\phi)-S(\phi)]$ can be similarly obtained as Example 1. Now we check $G_0^{*}(t,x;z)$
satisfies conditions (A)-(E). Let us recall that in this example
\begin{align*}
G_0^{*}(t,x;z)=&-z[\phi_0'(t)+r(\phi_0(t)+x)-l(\phi_0(t)+x)]\\
&+r(\phi_0(t)+x)(e^z-1)e^{z_0(t)}+l(\phi_0(t)+x)(e^{-z}-1)e^{-z_0(t)}.
\end{align*}
For conditions (A) and (B): we choose $\bar{G}_0(z)=C(e^{|z|}-1)$
with a positive constant $C,$ then $\underline{H}_0(u)=Ch(|u|/C),$
where function $h(y)=y\ln y-y+1$ for $y\geq1,$ and $=0$ for $0\leq
y<1.$ For condition (C): for any fixed $h>0,$ if $(t,x)$ and
$(s,y)$ are close enough, then
\begin{align}\label{cccccccc}
G_0^{*}(t,x;(1-h)z)-(1-h)G_0^{*}(s,y;z)\leq h.
\end{align}
To see (\ref{cccccccc}), note that for large $z\rightarrow\infty$ or
$z\rightarrow-\infty,$ the left hand side of (\ref{cccccccc}) goes
to $-\infty$ uniformly in $t$ and $x.$ This means that we just need
to consider bounded $z,$ which proves (\ref{cccccccc}). Conditions (D) and
(E) are easy to be checked.

\section{Proof of Theorem \ref{main-thm}}\label{sec:proof}
As mentioned in the introduction, we will use precise normal deviations for stochastic processes in our proof. The normal deviations needed here are not for the processes $\xi^{\epsilon},$ but for another family of processes $\eta^{\epsilon}$ related to $\xi^{\epsilon}.$

Because of assumption (F) in Section \ref{sec:main-theorem}, the ordinary differential equation $x'(t)=\alpha^1(t,x(t))$ with an initial condition $x(0)=0$ has a unique solution with $\alpha^1$ defined in (\ref{alphas-section-1-2}). Furthermore, this unique solution can be proved to be $\phi_0$ from Legendre transformation. Let us set
$$\eta^{\epsilon}_t=\epsilon^{-1/2}(\xi^{\epsilon}_t-\phi_0(t)).$$
Note that here the initial point $\eta^{\epsilon}_0=0.$ More generally, we consider an initial point $\eta^{\epsilon}_0=x$ in this section in order to fully exhibit the normal deviations. It was proved in \cite{Wentzell-Asymptotic-I-III} that as $\epsilon\rightarrow0$ the family $\eta^{\epsilon}$ under $\mathbb{P}^{z^{\epsilon}}_{0,x}$ converges weakly to a process
$\eta$, which is a Gaussian diffusion
process on the real line with generating operator
\begin{align}\label{generator-of-eta-00}
A^{\eta}_tf(x)=\alpha_2^1(t,\phi_0(t))\cdot x\cdot
f'(x)+\frac{1}{2}\alpha^2(t,\phi_0(t))\cdot f''(x),
\end{align} where
the subscript $_2$ means differentiation in the second spatial argument.

\subsection{Results on normal deviations}\label{subsec:normal-deviation}
Before the statement of normal deviations, let us recall a differential operator $A_1$
which was defined in \cite{Wentzell-Asymptotic-I-III} for functionals
$G$ on $D[0,T]$ (consisting of right continuous functions with left limits):
\begin{align*}
A_1G(x[0,T])&=\sum_{k=1}^3\int_{[0,T]^k}\Gamma_1^k(x[0,T];s_1,\cdots,s_k)G^{(k)}(x[0,T];ds_1\cdots
ds_k)
\end{align*}
where
\begin{align*}
\Gamma_1^1(x[0,T];s_1)&=\frac{1}{2}\int_0^{s_1}\alpha_{22}^1(t,\phi_0(t))x(t)^2\exp\left\{\int_0^{s_1}\alpha_2^1(v,\phi_0(v))dv\right\}dt,\\
\Gamma_1^2(x[0,T];s_1,s_2)&=\frac{1}{2}\int_0^{\min\{s_1,s_2\}}\alpha_{2}^2(t,\phi_0(t))x(t)\exp\left\{\sum_{i=1}^2\int_0^{s_i}\alpha_2^1(v,\phi_0(v))dv\right\}dt,\\
\Gamma_1^3(x[0,T];s_1,s_2,s_3)&=\frac{1}{6}\int_0^{\min\{s_1,s_2,s_3\}}\alpha^3(t,\phi_0(t))\exp\left\{\sum_{i=1}^3\int_0^{s_i}\alpha_2^1(v,\phi_0(v))dv\right\}dt.
\end{align*}
A crucial functional in \cite{Wentzell-Asymptotic-I-III} and \cite{Yang-SPA-2012} is defined through a conditional expectation on the past path
\begin{align*}
f(t,x[0,t])=\mathbb{E}_{t,x[0,t]}G(\eta),\quad t\in[0,T].
\end{align*}

\begin{theorem}[Theorem 5.2 in \cite{Yang-SPA-2012}]\label{theorem-from-SPA}
Let a functional $G(x[0,T])$ on $D[0,T]$ be $3(s-2)$ times
differentiable with the following conditions:

(I). there is a constant $C>0$ such that for all $x[0,T],
y[0,T]\in D[0,T]$
\begin{align*}
\left|G(x[0,T])\right|&\leq
C\left(1+|x(T)|^{s}+\int_0^T|x(t)|^{s}dt\right),\\
\left|G^{(i)}(x[0,T])(y[0,T]^{\otimes^i})\right|&\leq (1+||y||^i)
C\left(1+|x(T)|^{s-2}+\int_0^T|x(t)|^{s-2}dt\right), 1\leq i\leq3(s-2),
\end{align*}

(II). $G^{(i)}(x[0,T])(I_{[t,T]}\delta,\cdots,I_{[t,T]}\delta),1\leq
i\leq3(s-2),$ are continuous with respect to $x[0,T]$ uniformly as
$x[0,T]$ changes over an arbitrary compact subset of $D[0,T],$
$t$ over $[0,T],$ and $\delta[0,T]$ over the set of Lipschitz
continuous functions with constant $1,$ $||\delta||\leq 1.$

Then as $\epsilon\rightarrow0,$ under the assumptions of Theorem \ref{main-thm} the precise normal deviations hold
\begin{align}\label{main-proving-formula}
\mathbb{E}^{z^{\epsilon}}_{0,x}G(\eta^{\epsilon})=\mathbb{E}_{0,x}G(\eta)+\sum_{i=1}^{s-2}\epsilon^{i/2}\mathbb{E}_{0,x}A_iG(\eta)+o(\epsilon^{(s-2)/2})
\end{align}
where $A_1$ is a third-order differential operator defined before,
$A_2$ is a sixth-order differential operator given by
\begin{align*}
A_2&G(x[0,T])=\int_0^TA_1\widetilde{G}(x[0,t])dt+\int_0^T\Big[\frac{1}{3!}\alpha^1_{222}(t,\phi_0(t))x(t)^3f^{(1)}(t,x[0,t])(I_{\{t\}})\\
&+\frac{1}{4}\alpha^2_{22}(t,\phi_0(t))x(t)^2f^{(2)}(t,x[0,t])(I_{\{t\}}^{\otimes^2})+\frac{1}{3!}\alpha^3_{2}(t,\phi_0(t))x(t)f^{(3)}(t,x[0,t])(I_{\{t\}}^{\otimes^3})\\
&+\frac{1}{4!}\alpha^4(t,\phi_0(t))f^{(4)}(t,x[0,t])(I_{\{t\}}^{\otimes^4})\Big]dt
\end{align*}
with
\begin{align*}
\widetilde{G}(x[0,t])=&\frac{1}{2}\alpha^1_{22}(t,\phi_0(t))x^2(t)f^{(1)}(t,x[0,t])(I_{\{t\}})+\frac{1}{2}\alpha^2_{2}(t,\phi_0(t))x(t)f^{(2)}(t,x[0,t])(I_{\{t\}}^{\otimes^2})\notag\\
&+\frac{1}{6}\alpha^3(t,\phi_0(t))f^{(3)}(t,x[0,t])(I_{\{t\}}^{\otimes^3}),
\end{align*}
and $A_3,\cdots,A_{s-2}$ are suitable differential operators defined
through derivatives of $f(t,x[0,t]).$
\end{theorem}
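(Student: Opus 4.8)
The plan is to reduce the expansion to an iterated application of the compensating-operator identity (\ref{compensating-op-definition}) for the rescaled process $\eta^{\epsilon}_t=\epsilon^{-1/2}(\xi^{\epsilon}_t-\phi_0(t))$, using the value functional of the limit diffusion $\eta$ as a corrector that annihilates the leading generator. First I would compute the compensating operator of $\eta^{\epsilon}$ under $\mathbb{P}^{z^{\epsilon}}_{0,x}$. Starting from $\mathfrak{A}^{z^{\epsilon}}$ in (\ref{sep-21-1}), the substitution $x=\phi_0(t)+\epsilon^{1/2}y$ together with a Taylor expansion of the jump term $\epsilon^{-1}\int_{\mathbb{R}}\bigl[g(t,y+\epsilon^{1/2}u)-g(t,y)-\epsilon^{1/2}u\,g_y\bigr]e^{z_0(t)u}\nu_{t,x}(du)$ in powers of $\epsilon^{1/2}u$ — legitimate because $\nu_{t,x}$ has bounded support, so every moment $\int u^{j}e^{z_0u}\nu_{t,x}(du)$ is finite — yields a formal expansion $\mathfrak{A}^{\eta^{\epsilon}}=\mathcal{L}_0+\epsilon^{1/2}\mathcal{L}_1+\epsilon\mathcal{L}_2+\cdots$. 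The ODE $\phi_0'=\alpha^1(\cdot,\phi_0)$ cancels the $\epsilon^{-1/2}$ term, leaving $\mathcal{L}_0$ with drift $\alpha^1_2(t,\phi_0(t))\,y$ and diffusion $\alpha^2(t,\phi_0(t))$, i.e. exactly the compensating operator of the limit Gaussian diffusion (generator $A^{\eta}_t$ of (\ref{generator-of-eta-00})); the coefficients of $\epsilon^{1/2}$ and $\epsilon$ carry precisely the ingredients $\alpha^1_{22},\alpha^2_2,\alpha^3$ and $\alpha^1_{222},\alpha^2_{22},\alpha^3_2,\alpha^4$ that appear verbatim in the kernels $\Gamma^k_1$ and in $A_2$.

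Next I would introduce the value functional $f(t,x[0,t])=\mathbb{E}_{t,x[0,t]}G(\eta)$. The tower property gives $f(T,x[0,T])=G(x[0,T])$ and $f(0,x)=\mathbb{E}_{0,x}G(\eta)$, while the martingale property of $f(t,\eta[0,t])$ under the law of $\eta$ yields the backward identity $\mathcal{L}_0f\equiv0$, understood through the directional functional derivatives $f^{(k)}(t,x[0,t])(I_{\{t\}}^{\otimes^k})$. Applying (\ref{compensating-op-definition}) for $\mathfrak{A}^{\eta^{\epsilon}}$ to $f$ and using $\mathcal{L}_0f=0$ then produces
\[
\mathbb{E}^{z^{\epsilon}}_{0,x}G(\eta^{\epsilon})-\mathbb{E}_{0,x}G(\eta)=\mathbb{E}^{z^{\epsilon}}_{0,x}\int_0^T\bigl(\epsilon^{1/2}\mathcal{L}_1+\epsilon\mathcal{L}_2+\cdots\bigr)f(t,\eta^{\epsilon}[0,t])\,dt.
\]

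The third step is the bootstrap. The right-hand side is again an expectation under $\mathbb{P}^{z^{\epsilon}}$ of a path functional of $\eta^{\epsilon}$, namely $\int_0^T\mathcal{L}_jf\,dt$, but of one higher order in $\epsilon^{1/2}$, so I would apply the same reduction recursively: replacing $\mathbb{E}^{z^{\epsilon}}$ by $\mathbb{E}$ in each correction costs a further $\epsilon^{1/2}$, which is itself expanded by the identity above. After $s-2$ iterations the accumulated main terms are expectations $\mathbb{E}_{0,x}[\cdots]$ of compositions of the $\mathcal{L}_j$ propagated by the limit diffusion; collecting the coefficient of $\epsilon^{i/2}$ and using the Gaussian Green's function $\exp\{\int_0^{s}\alpha^1_2(v,\phi_0(v))dv\}$ to convert the $I_{\{t\}}$-directional derivatives of $f$ into the terminal derivatives $G^{(k)}(\cdot;ds_1\cdots ds_k)$ reproduces exactly the kernels $\Gamma^k_1$, hence $A_1$, and analogously $A_2$, whose $\int_0^TA_1\widetilde{G}\,dt$ piece is the second-order iterate and whose remaining bracket is the direct $\mathcal{L}_2$ contribution; the operators $A_3,\dots,A_{s-2}$ emerge as the higher iterates.

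The hardest part will be the uniform remainder control. Each iteration requires uniform-in-$\epsilon$ moment bounds $\sup_{\epsilon}\mathbb{E}^{z^{\epsilon}}_{0,x}\|\eta^{\epsilon}\|^{p}<\infty$, so that the Taylor remainders in the jump expansion are integrable and genuinely of the next order, together with enough regularity and growth of $f$ and of its derivatives $f^{(k)}(t,\cdot)(I_{\{t\}}^{\otimes^k})$ that $\mathcal{L}_jf$ satisfies hypotheses of type (I)--(II) at each stage so the recursion closes. This is precisely what conditions (I) and (II), together with differentiability up to order $3(s-2)$ — the factor three reflecting that $\mathcal{L}_1$ raises the functional-derivative order by three and is applied up to $s-2$ times — are designed to supply. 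Propagating these bounds through the Gaussian transition and verifying that the cumulative replacement $\mathbb{E}^{z^{\epsilon}}\to\mathbb{E}$ is $o(\epsilon^{(s-2)/2})$ after $s-2$ steps is the technical crux, and rests on assumptions (A)--(E) for $G_0$ and $G_0^{*}$ to guarantee the tightness and continuity of $\eta^{\epsilon}$ under $\mathbb{P}^{z^{\epsilon}}$.
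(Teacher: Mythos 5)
Your proposal follows essentially the same route as the actual proof: this paper does not prove the statement at all (it is imported verbatim as Theorem 5.2 of \cite{Yang-SPA-2012}, building on \cite{Wentzell-Asymptotic-I-III}), but the mechanism of that cited proof is visible in the statement itself, and your reconstruction matches it point for point --- the expansion $\mathfrak{A}^{\eta^{\epsilon}}=\mathcal{L}_0+\epsilon^{1/2}\mathcal{L}_1+\epsilon\,\mathcal{L}_2+\cdots$ read off from (\ref{compen-eta-h}) using the bounded support of $\nu_{t,x}$, the cancellation of the $\epsilon^{-1/2}$ drift via $\phi_0'(t)=\alpha^1(t,\phi_0(t))$, the value functional $f(t,x[0,t])=\mathbb{E}_{t,x[0,t]}G(\eta)$ with the backward identity $\mathcal{L}_0f=0$ killing the leading term, the iterated Dynkin-type bootstrap whose second stage yields exactly the $\int_0^T A_1\widetilde{G}(x[0,t])\,dt$ piece plus the direct $\mathcal{L}_2$ bracket in $A_2$ (with $\widetilde{G}$ being $\mathcal{L}_1f$), the linear-flow factor $\exp\{\int\alpha^1_2(v,\phi_0(v))dv\}$ converting $I_{\{t\}}$-derivatives of $f$ into the kernels $\Gamma_1^k$, and the count $3(s-2)$ coming from $\mathcal{L}_1$ raising the functional-derivative order by three per iteration. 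Your identification of the remainder control as the technical crux (uniform moment bounds of the type proved in Lemma \ref{good-lemma}, weak convergence of $\eta^{\epsilon}$ under (A)--(E), and conditions (I)--(II) closing the recursion) is likewise consistent with the cited argument, with the sole caveat that those estimates can only be checked against \cite{Yang-SPA-2012} itself, not against anything displayed in this paper.
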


\subsection{Large deviations for $\epsilon^{1/2}\eta^{\epsilon}$}\label{subsec:large-deviation-for-eta}
It can be easily seen that the process
$\eta^{\epsilon}=\epsilon^{-1/2}(\xi^{\epsilon}-\phi_0)$ under the measure
$\mathbb{P}^{z^{\epsilon}},$ has compensating operator
\begin{equation}\label{compen-eta-h}
\begin{aligned}
\mathfrak{A}^{\eta^{\epsilon}}f(t,x)&=\frac{\partial f}{\partial
t}(t,x)+\epsilon^{-1/2}\left[\frac{\partial G_0}{\partial
z}(t,\phi_0(t)+\epsilon^{1/2}x;z_0(t))-\phi_0'(t)\right]\frac{\partial
f}{\partial x}(t,x)\\
&\qquad+\frac{1}{2}a(t,\phi_0(t)+\epsilon^{1/2}x)\frac{\partial^2
f}{\partial x^2}(t,x)\\
&\qquad+
{\epsilon}^{-1}\int_{\mathbb{R}}\left[f(t,x+\epsilon^{1/2}u)-f(t,x)-\epsilon^{1/2}\frac{\partial
f}{\partial x}(t,x)\cdot
u\right]e^{z_0(t)u}\nu_{t,\phi_0(t)+\epsilon^{1/2}x}(du),
\end{aligned}
\end{equation}
and cumulant
\begin{align*}
G^{\eta^{\epsilon}}(t,x;z)=&z\cdot
\epsilon^{-1/2}\left[\frac{\partial G_0}{\partial
z}(t,\phi_0(t)+\epsilon^{1/2}x;z_0(t))
-\phi_0'(t)\right]+\frac{1}{2}a(t,\phi_0(t)+\epsilon^{1/2}x)z^2\\
&+{\epsilon}^{-1}\int_{\mathbb{R}}\left[e^{z\sqrt{\epsilon}u}-1-z\sqrt{\epsilon}u\right]e^{z_0(t)u}\nu_{t,\phi_0(t)+\epsilon^{1/2}x}(du)\notag.
\end{align*}
The limiting process $\eta$ has compensating operator given by
\begin{align*}
\mathfrak{A}^{\eta}f(t,x)=\frac{\partial f}{\partial
t}(t,x)+x\cdot\frac{\partial^2 G_0}{\partial z
\partial x}(t,\phi_0(t);&z_0(t))\cdot\frac{\partial
f}{\partial x}(t,x)+ \frac{1}{2}\frac{\partial^2 G_0}{\partial
z^2}(t,\phi_0(t);z_0(t))\cdot\frac{\partial^2 f}{\partial x^2}(t,x),
\end{align*}
and cumulant
\begin{align*}
G^{\eta}(t,x;z)&=zx\cdot\frac{\partial^2 G_0}{\partial z
\partial x}(t,\phi_0(t);z_0(t))+\frac{1}{2}z^2\cdot\frac{\partial^2 G_0}{\partial
z^2}(t,\phi_0(t);z_0(t)).
\end{align*}
In this case, the Legendre transformation of $G^{\eta}(t,x;z)$ in
$z$ becomes
\begin{align*}
H^{\eta}(t,x;u)&=\frac{1}{2}\frac{\partial^2 H_0}{\partial
u^2}(t,\phi_0(t);\phi_0'(t))\left(u-x\cdot\frac{\partial^2
G_0}{\partial z
\partial x}(t,\phi_0(t);z_0(t))\right)^2.
\end{align*}
It then follows from Section 5.2.6 of \cite{Wentzell-LD-Markov-Processes-1986} that the normalized action functional for the family of
processes $\epsilon^{1/2}\eta$ is
\begin{align*}
I^{\eta}(f(\cdot))=\int_0^TH^{\eta}(t,f(t);f'(t))dt.
\end{align*}
An important tool we need for the proof of Theorem~\ref{main-thm} is
the following convergence
\begin{align}\label{eponential-conv-zero}
\mathbb{P}^{z^{\epsilon}}\left\{||\eta^{\epsilon}||\geq
\epsilon^{-1/2}\right\}\rightarrow0\text{ exponentially
fast as }\epsilon\downarrow0.
\end{align}

\subsubsection{Proof
of~(\ref{eponential-conv-zero})}\label{proof-finite-under-H-2}
Let us consider the family of processes
$\zeta^{\epsilon}=\epsilon^{1/2}\eta^{\epsilon}=\xi^{\epsilon}-\phi_0$
with respect to probabilities $\mathbb{P}^{z^{\epsilon}}.$ Since $\eta^{\epsilon}$ converge weakly to $\eta,$ it is expected that the the most
probable trajectory of $\zeta^{\epsilon}$ as ${\epsilon}\downarrow0$
is path zero. Then the exponential convergence to zero of
$\mathbb{P}^{z^{\epsilon}}\left\{||\eta^{\epsilon}||\geq
\epsilon^{-1/2}\right\}=\mathbb{P}^{z^{\epsilon}}\left\{||\zeta^{\epsilon}||\geq
1\right\}$ follows provided a large deviation result for
$\zeta^{\epsilon}$ is proved. To be precise, we now prove that a large principle holds for
$\zeta^{\epsilon}$ under the assumptions of Theorem \ref{main-thm}

First it is straightforward to compute the cumulant of $\zeta^{\epsilon}:$
\begin{align*}
G^{\zeta^{\epsilon}}(t,x;z)=&z\left[\frac{\partial G_0}{\partial
z}(t,\phi_0(t)+x;z_0(t))-\phi'_0(t)\right]+\frac{\epsilon}{2}a(t,\phi_0(t)+x)z^2\\
&+{\epsilon}^{-1}\int_{\mathbb{R}}\left[e^{z{\epsilon}u}-1-z{\epsilon}u\right]e^{z_0(t)u}\nu_{t,\phi_0(t)+x}(du),
\end{align*}
which satisfies $\epsilon
G^{\zeta^{\epsilon}}(t,x;{\epsilon}^{-1}z)=G_0^{*}(t,x;z).$ Then by taking into account the assumptions (A)-(F), we can deduce a large deviation principle with the following normalized action functional
$$S_{0,T}^{*}(\phi)=\int_0^TH_0^{*}(t,\phi(t);\phi'(t))dt,$$
for absolutely continuous function $\phi$ (see Theorem 3.2.1 in \cite{Wentzell-LD-Markov-Processes-1986} for details).

Now let us consider a closed set $A$ in $D_0[0,T]$ given by
$A=\{x[0,T]:||x[0,T]||\geq 1\}.$ The large deviation principle
for $\zeta^{\epsilon}$ gives that, for any small $\gamma>0,$ there is $\epsilon_0$ such that for all $\epsilon\in(0,\epsilon_0),$
\begin{align*}
\mathbb{P}^{z^{\epsilon}}\left\{||\eta^{\epsilon}||\geq
\epsilon^{-1/2}\right\}&=\mathbb{P}^{z^{\epsilon}}\left\{||\zeta^{\epsilon}||\geq
1\right\}=\mathbb{P}^{z^{\epsilon}}\left\{\zeta^{\epsilon}\in A\right\}\leq\exp\left\{-{\epsilon}^{-1}[\inf_{\phi\in
A}S_{0,T}^{*}(\phi)-\gamma]\right\}.
\end{align*}
So (\ref{eponential-conv-zero}) is proved if $C_A:=\inf_{\phi\in
A}S_{0,T}^{*}(\phi)>0.$

Note that $C_A$ is reached at some point $\phi_A.$ This is because
$A$ is closed and any level set $\left\{\phi\in
D_0[0,T]:S_{0,T}^{*}(\phi)\leq s\right\}$ is compact for any $s>0.$
Now we set $C_A=S_{0,T}^{*}(\phi_A).$ If $S_{0,T}^{*}(\phi_A)=0,$
then $H_0^{*}(t,\phi_A(t);\phi_A'(t))=0$ almost everywhere with
respect to Lebesgue measure, i.e., for all $z\in \mathbb{R},$
\begin{align*}
z\cdot \phi_A'(t)-G_0^{*}(t,\phi_A(t);z)\leq 0,\text{ almost all }t.
\end{align*}
Thus for almost all $t,$
\begin{align*}
&\phi_A'(t)\leq\lim_{z\downarrow0}z^{-1}G_0^{*}(t,\phi_A(t);z)=\left[\frac{\partial
G_0}{\partial
z}(t,\phi_0(t)+\phi_A(t);z_0(t))-\phi'_0(t)\right]\\
&+\lim_{z\downarrow0}z^{-1}\int_{\mathbb{R}}\left(e^{zu}-1-zu\right)e^{z_0(t)u}\nu_{t,\phi_0(t)+\phi_A(t)}(du)=\frac{\partial
G_0}{\partial z}(t,\phi_0(t)+\phi_A(t);z_0(t))-\phi'_0(t),
\end{align*}
where the fact that the second limit is equal to zero is from the
assumption that $\nu_{t,x}$ have a bounded support $K.$ Similarly,
\begin{align*}
\phi_A'(t)\geq\lim_{z\uparrow0}z^{-1}G_0^{*}(t,\phi_A(t);z)=\frac{\partial
G_0}{\partial z}(t,\phi_0(t)+\phi_A(t);z_0(t))-\phi'_0(t),
\end{align*}
thus $\phi_A'(t)=\frac{\partial G_0}{\partial
z}(t,\phi_0(t)+\phi_A(t);z_0(t))-\phi'_0(t).$ Taking the initial
condition $\phi_A(0)=0$ into account, we deduce that
$\phi_A\equiv0.$ But this is a contradiction with $||\phi_A||\geq1,$
so $S_{0,T}^{*}(\phi_A)\neq0,$ i.e. $C_A>0.$

\subsection{Taylor's expansions and estimates}\label{subsec:Taylor-expansions-estimates}
We start this section with a technical lemma which suggests that when we consider the precise asymptotics for large deviations, the part away from the most probable path can be simply dropped.
\begin{lemma}\label{lemma-01}
Let the family $\xi^{\epsilon}$ satisfy a large deviation principle with a normalized action functional $S,$ and $F$ be a continuous measurable functional on $D_0[0,T].$ Suppose $F$ is bounded above and the difference $F-S$ attains its maximum at a unique function
$\phi_0\in D_0[0,T].$ Then for any $h>0,$ there is a $\gamma>0$ such
that as ${\epsilon}\rightarrow0,$
$$\mathbb{E}^{\epsilon}\left[1_{\{||\xi^{\epsilon}-\phi_0||\geq h\}}\exp\{{\epsilon}^{-1}F(\xi^{\epsilon})\}\right]=o\left(\exp\left\{{\epsilon}^{-1}[F(\phi_0)-S(\phi_0)-\gamma]\right\}\right).$$
\end{lemma}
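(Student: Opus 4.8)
The plan is to split the expectation at the boundary $\{\|\xi^{\epsilon}-\phi_0\|\ge h\}$ and show that, on this set, the integrand decays at a strictly faster exponential rate than the benchmark $\exp\{\epsilon^{-1}(F(\phi_0)-S(\phi_0))\}$. The key observation is that $\phi_0$ is the \emph{unique} maximizer of $F-S$, so moving a fixed distance $h$ away from $\phi_0$ must cost a definite amount. First I would quantify this cost: define the closed set $A_h=\{x\in D_0[0,T]:\|x-\phi_0\|\ge h\}$ and consider the supremum $M_h:=\sup_{x\in A_h}[F(x)-S(x)]$. Because $F$ is bounded above, $S$ has compact level sets, and $\phi_0$ is the unique maximizer, one expects $M_h<F(\phi_0)-S(\phi_0)$ strictly; set $\gamma:=\tfrac12\,[F(\phi_0)-S(\phi_0)-M_h]>0$, which will be the constant in the statement.

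The main analytic step is a Varadhan-type upper bound for the restricted integral. Since $F$ is bounded above, say $F\le F_{\max}$, I would truncate $F$ from below and use the large deviation upper bound \eqref{eq:LDP-definition} on the closed set $A_h$. More concretely, I would cover $A_h$ (or its relevant sublevel portion) so that on each piece $F$ is controlled by its value near a point together with the rate cost $S$, giving a bound of Laplace–Varadhan type:
\begin{align*}
\limsup_{\epsilon\to0}\;\epsilon\ln \mathbb{E}^{\epsilon}\!\left[1_{A_h}(\xi^{\epsilon})\exp\{\epsilon^{-1}F(\xi^{\epsilon})\}\right]\le \sup_{x\in A_h}\,[F(x)-S(x)]=M_h.
\end{align*}
This is the standard large deviation upper bound for integrals against a continuous, bounded-above functional, restricted to a closed set; its proof uses the upper bound in \eqref{eq:LDP-definition}, the compactness of level sets of $S$, and upper semicontinuity of $F-S$ (which follows from continuity of $F$ and lower semicontinuity of $S$).

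Granting the displayed $\limsup$ bound, the conclusion is immediate: for all sufficiently small $\epsilon$,
\begin{align*}
\mathbb{E}^{\epsilon}\!\left[1_{\{\|\xi^{\epsilon}-\phi_0\|\ge h\}}\exp\{\epsilon^{-1}F(\xi^{\epsilon})\}\right]\le \exp\{\epsilon^{-1}(M_h+\gamma)\}=\exp\{\epsilon^{-1}(F(\phi_0)-S(\phi_0)-\gamma)\},
\end{align*}
which is exactly the claimed $o(\exp\{\epsilon^{-1}[F(\phi_0)-S(\phi_0)-\gamma]\})$ after absorbing a further factor (or simply replacing $\gamma$ by a slightly smaller positive constant to turn the $O$ into $o$). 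I expect the main obstacle to be the rigorous verification of the Varadhan upper bound on the \emph{unbounded} functional side: $F$ is bounded above but $S$ may be unbounded and $A_h$ is noncompact, so the cover/truncation argument must be handled carefully, for instance by intersecting with a sublevel set $\{S\le L\}$ (compact) and controlling the complement $\{S>L\}\cap A_h$ via the large deviation bound together with $F\le F_{\max}$, letting $L\to\infty$. The strict inequality $M_h<F(\phi_0)-S(\phi_0)$ also deserves care: it relies on uniqueness of the maximizer and an upper-semicontinuity-plus-compactness argument showing the supremum over the closed set $A_h$ is attained below the global maximum.
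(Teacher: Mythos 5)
Your proposal is correct and follows essentially the same route as the paper: quantify the positive gap $\gamma$ between $F(\phi_0)-S(\phi_0)$ and the supremum of $F-S$ over the closed set $\{\|\phi-\phi_0\|\ge h\}$ (attained, as you note, by intersecting with a compact sublevel set of $S$ and using upper semicontinuity), and then apply the restricted Laplace--Varadhan upper bound on that set via the LDP upper bound. The only implementation difference is in the treatment of $F$ being unbounded below: the paper proves the upper bound by slicing the range of $F$ into intervals of width $\gamma/4$, which forces it to first truncate $F$ from below and re-verify that the truncated functional still has $\phi_0$ as its unique maximizer, whereas you handle this directly by localizing to sublevel sets of $S$ and controlling the complement with $F\le F_{\max}$.
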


The proof of this lemma is contained in the Appendix. It follows from this lemma that for any
$h>0,$ there exists some $\gamma>0$ such that as
${\epsilon}\rightarrow0,$
\begin{align*}
\mathbb{E}^{\epsilon}\left[H(\xi^{\epsilon})\exp\{{\epsilon}^{-1}F(\xi^{\epsilon})\}\right]=&\mathbb{E}^{\epsilon}\left[1_{\left\{||\xi^{\epsilon}-\phi_0||<h\right\}}H(\xi^{\epsilon})\exp\{{\epsilon}^{-1}F(\xi^{\epsilon})\}\right]\\
&+o\left(\exp\left\{{\epsilon}^{-1}\left[F(\phi_0)-S(\phi_0)-\gamma\right]\right\}\right).
\end{align*}
Noticing that $\exp\{-{\epsilon}^{-1}\gamma\}$ tends to zero exponentially fast, we thus only focus on the first part over the set $\left\{||\xi^{\epsilon}-\phi_0||<h\right\}.$ Simple calculations yield
\begin{equation}\label{first-step-1.5}
\begin{aligned}
&\mathbb{E}^{\epsilon}\left[1_{\left\{||\xi^{\epsilon}-\phi_0||<h\right\}}H(\xi^{\epsilon})\exp\{{\epsilon}^{-1}F(\xi^{\epsilon})\}\right]\\
&=\mathbb{E}^{z^{\epsilon}}\left[1_{\left\{||\xi^{\epsilon}-\phi_0||<h\right\}}H(\xi^{\epsilon})\exp\left\{{\epsilon}^{-1}F(\xi^{\epsilon})-{\epsilon}^{-1}\int_0^Tz_0(t)d\xi_t^{\epsilon}+{\epsilon}^{-1}\int_0^TG_0(t,\xi_t^{\epsilon};z_0(t))dt\right\}\right]\\
&=\mathbb{E}^{z^{\epsilon}}\Big[1_{\left\{||\eta^{\epsilon}||<\epsilon^{-1/2}h\right\}}H(\phi_0+{\epsilon}^{1/2}\eta^{\epsilon})\exp\Big\{{\epsilon}^{-1}F(\phi_0+{\epsilon}^{1/2}\eta^{\epsilon})-{\epsilon}^{-1/2}\int_0^Tz_0(t)d\eta_t^{\epsilon}\\
&\qquad\qquad-{\epsilon}^{-1}\int_0^T\Big(z_0(t)\phi'_0(t)-G_0(t,\phi_0(t)+{\epsilon}^{1/2}\eta_t^{\epsilon};z_0(t))\Big)dt\Big\}\Big].
\end{aligned}
\end{equation}
Now we apply Taylor's expansion for $F$ at $\phi_0$ up to order $s$ with an integral's remainder ($IR$) (see \cite{Cartan-1971} for
details),
\begin{align*}
&F(\phi_0+{\epsilon}^{1/2}\eta^{\epsilon})=F(\phi_0)+{\epsilon}^{1/2}F^{(1)}(\phi_0)(\eta^{\epsilon})+\cdots+\frac{{\epsilon}^{s/2}}{s!}F^{(s)}(\phi_0)(\eta^{\epsilon},\cdots,\eta^{\epsilon})+IR_1,\\
&\qquad\qquad\qquad\qquad\qquad\qquad IR_1={\epsilon}^{\frac{s+1}{2}}\int_0^1\frac{(1-u)^s}{s!}F^{(s+1)}(\phi_0+u{\epsilon}^{1/2}\eta^{\epsilon})(\eta^{\epsilon},\cdots,\eta^{\epsilon})du.
\end{align*}
For $G_0,$ Taylor's expansion at $\phi_0(t)$ in the second argument up to order $s$ with an integral's remainders yields
\begin{align*}
&\int_0^TG_0(t,\phi_0(t)+{\epsilon}^{1/2}\eta_t^{\epsilon};z_0(t))dt=\\
&\int_0^T\left(G_0(t,\phi_0(t);z_0(t))+{\epsilon}^{1/2}\eta_t^{\epsilon}\frac{\partial
G_0}{\partial
x}(t,\phi_0(t);z_0(t))+\cdots+\frac{\left({\epsilon}^{1/2}\eta_t^{\epsilon}\right)^s}{s!}\frac{\partial^s
G_0}{\partial
x^s}(t,\phi_0(t);z_0(t))\right)dt\\
&\qquad\qquad+IR_2
\end{align*}
where
$$
IR_2=\int_0^T\left({\epsilon}^{1/2}\eta_t^{\epsilon}\right)^{s+1}\int_0^1\frac{(1-u)^s}{s!}\frac{\partial^{s+1}
G_0}{\partial
x^{s+1}}(t,\phi_0(t)+u{\epsilon}^{1/2}\eta_t^{\epsilon};z_0(t))dudt.$$
We use these expansions to replace the exponential in
(\ref{first-step-1.5}) to get
\begin{equation}\label{first-step-2}
\begin{aligned}
&\exp\left\{{\epsilon}^{-1}F(\phi_0+\epsilon^{1/2}\eta^{\epsilon})-\epsilon^{-1/2}\int_0^Tz_0(t)d\eta_t^{\epsilon}\right.\\
&\qquad\qquad\qquad\qquad\qquad\quad\left.-{\epsilon}^{-1}\int_0^T\Big(z_0(t)\phi'_0(t)-G_0(t,\phi_0(t)+\epsilon^{1/2}\eta_t^{\epsilon};z_0(t))\Big)dt\right\}\\
&=\exp\Big\{{\epsilon}^{-1}\left[F(\phi_0)-\int_0^T\Big(z_0(t)\phi'_0(t)-G_0(t,\phi_0(t);z_0(t))\Big)dt\right]\\
&\qquad\qquad+{\epsilon}^{-1/2}\left[F^{(1)}(\phi_0)(\eta^{\epsilon})-\int_0^Tz_0(t)d\eta_t^{\epsilon}+\int_0^T\eta_t^{\epsilon}\frac{\partial G_0}{\partial x}(t,\phi_0(t);z_0(t))dt\right]\\
&\qquad\qquad+\left[\frac{1}{2!}F^{(2)}(\phi_0)(\eta^{\epsilon},\eta^{\epsilon})+\int_0^T\frac{1}{2!}(\eta_t^{\epsilon})^2\frac{\partial^2 G_0}{\partial x^2}(t,\phi_0(t);z_0(t))dt\right]\\
&\qquad\qquad+{\epsilon}^{1/2}\left[\frac{1}{3!}F^{(3)}(\phi_0)(\eta^{\epsilon},\eta^{\epsilon},\eta^{\epsilon})+\int_0^T\frac{1}{3!}(\eta_t^{\epsilon})^3\frac{\partial^3 G_0}{\partial x^3}(t,\phi_0(t);z_0(t))dt\right]+\cdots\\
&\qquad\qquad+{\epsilon}^{\frac{s-2}{2}}\left[\frac{1}{s!}F^{(s)}(\phi_0)(\eta^{\epsilon},\cdots,\eta^{\epsilon})+\int_0^T\frac{1}{s!}(\eta_t^{\epsilon})^s\frac{\partial^s
G_0}{\partial
x^s}(t,\phi_0(t);z_0(t))dt\right]\\
&\qquad\qquad+{\epsilon}^{-1}(IR_1+IR_2)\Big\}\\
&=\exp\Big\{{\epsilon}^{-1}\left[F(\phi_0)-S(\phi_0)\right]+\left[\frac{1}{2!}F^{(2)}(\phi_0)(\eta^{\epsilon},\eta^{\epsilon})+\int_0^T\frac{1}{2!}(\eta_t^{\epsilon})^2\frac{\partial^2 G_0}{\partial x^2}(t,\phi_0(t);z_0(t))dt\right]\\
&\qquad\qquad+{\epsilon}^{1/2}\left[\frac{1}{3!}F^{(3)}(\phi_0)(\eta^{\epsilon},\eta^{\epsilon},\eta^{\epsilon})+\int_0^T\frac{1}{3!}(\eta_t^{\epsilon})^3\frac{\partial^3 G_0}{\partial x^3}(t,\phi_0(t);z_0(t))dt\right]+\cdots\\
&\qquad\qquad+{\epsilon}^{\frac{s-2}{2}}\left[\frac{1}{s!}F^{(s)}(\phi_0)(\eta^{\epsilon},\cdots,\eta^{\epsilon})+\int_0^T\frac{1}{s!}(\eta_t^{\epsilon})^s\frac{\partial^s
G_0}{\partial
x^s}(t,\phi_0(t);z_0(t))dt\right]\\
&\qquad\qquad+{\epsilon}^{-1}(IR_1+IR_2)\Big\}\\
&=\exp\left\{{\epsilon}^{-1}\left[F(\phi_0)-S(\phi_0)\right]+Q(2,\eta^{\epsilon})\right\}\times\\
&\qquad\exp\Big\{{\epsilon}^{1/2}Q(3,\eta^{\epsilon})+\cdots+{\epsilon}^{\frac{s-2}{2}}Q(s,\eta^{\epsilon})+{\epsilon}^{-1}(IR_1+IR_2)\Big\}.
\end{aligned}
\end{equation}
For the second exponential function in the last equality of
(\ref{first-step-2}), we apply
$e^a=1+a+\cdots+a^{s-2}/(s-2)!+\frac{e^{\theta(a)\cdot
a}}{(s-1)!}a^{s-1}$ with $0\leq\theta(a)\leq1,$ then formula
(\ref{first-step-2}) is equal to
\begin{align*}
(\ref{first-step-2})=&\exp\left\{{\epsilon}^{-1}\left[F(\phi_0)-S(\phi_0)\right]+Q(2,\eta^{\epsilon})\right\}\times\\
&\left\{1+\epsilon^{1/2}Q(3,\eta^{\epsilon})+\epsilon\left(Q(4,\eta^{\epsilon})+\frac{1}{2}Q(3,\eta^{\epsilon})^2\right)+\cdots+{\epsilon}^{\frac{s-2}{2}}\ell(\epsilon,\eta^{\epsilon})+\Re(\epsilon,\eta^{\epsilon})\right\}
\end{align*}
where the coefficient $\ell(\epsilon,\eta^{\epsilon})$ of ${\epsilon}^{\frac{s-2}{2}}$ is a functional of $\eta^{\epsilon}$ and depends on $\epsilon.$ Special attention needs to be paid to the structure of the remainder term $\Re(\epsilon,\eta^{\epsilon}).$ There are two different aspects $\Re(\epsilon,\eta^{\epsilon})=\Re_1(\epsilon,\eta^{\epsilon})+\Re_2(\epsilon,\eta^{\epsilon}),$ where the first $\Re_1(\epsilon,\eta^{\epsilon})$ can be bounded by powers of $\eta^{\epsilon}$, while the second $\Re_2(\epsilon,\eta^{\epsilon})$ involves a part $e^{\theta(a)\cdot
a}.$ By taking conditions
(\ref{condition-on-F})-(\ref{condition-on-H}) into account,
$$|\ell(\epsilon,\eta^{\epsilon})|+|\Re_1(\epsilon,\eta^{\epsilon})|\leq \epsilon^{\frac{s-1}{2}}\cdot c\cdot\left(1+\left|\eta_T^{\epsilon}\right|^{k}+\int_0^T\left|\eta_t^{\epsilon}\right|^{k}dt\right),$$
for some nonnegative constants $c$ and $k.$ The second one $\Re_2(\epsilon,\eta^{\epsilon})$ on
the set $\{||\eta^{\epsilon}||<\epsilon^{-1/2}h\}$) can be estimated as
$$|\Re_2(\epsilon,\eta^{\epsilon})|\leq \epsilon^{\frac{s-1}{2}}\cdot c\cdot\left(1+\left|\eta_T^{\epsilon}\right|^{k}+\int_0^T\left|\eta_t^{\epsilon}\right|^{k}dt\right)e^{h\cdot||\eta^{\epsilon}||^2}.$$
H\"{o}lder's inequality and the fact
$\mathbb{E}^{z^{\epsilon}}1_{\{||\eta^{\epsilon}||<\epsilon^{-1/2}h\}}e^{h\cdot||\eta^{\epsilon}||^2}<C<\infty$
uniformly in $\epsilon$ (see Section 5.2.6 in
\cite{Wentzell-LD-Markov-Processes-1986}) suggest that we only need to
take care of $\Re_1(\epsilon,\eta^{\epsilon}).$

Taylor's expansion for $H(\phi_0+{\epsilon}^{1/2}\eta^{\epsilon})$ at
$\phi_0$ up to $(s-2)$-derivative gives
\begin{equation}\label{first-step-3}
\begin{aligned}
H(\phi_0+{\epsilon}^{1/2}\eta^{\epsilon})&=H(\phi_0)+{\epsilon}^{1/2}H^{(1)}(\phi_0)(\eta^{\epsilon})+\cdots+\frac{{\epsilon}^{\frac{s-2}{2}}}{(s-2)!}H^{(s-2)}(\phi_0)(\eta^{\epsilon},\cdots,\eta^{\epsilon})+IR_3,\\
&\qquad\qquad
IR_3=\epsilon^{\frac{s-1}{2}}\int_0^1\frac{(1-u)^{s-2}}{(s-2)!}H^{(s-1)}(\phi_0+u{\epsilon}^{1/2}\eta^{\epsilon})(\eta^{\epsilon},\cdots,\eta^{\epsilon})du,
\end{aligned}
\end{equation}
Now we combine (\ref{first-step-2}) and~(\ref{first-step-3}) to
rewrite (\ref{first-step-1.5}) on the set
$\{||\eta^{\epsilon}||<\epsilon^{-1/2}h\}$ as follows,
\begin{equation}\label{first-step-4}
\begin{aligned}
&\mathbb{E}^{\epsilon}\left[1_{\left\{||\eta^{\epsilon}||<\epsilon^{-1/2}h\right\}}H(\xi^{\epsilon})\exp\{{\epsilon}^{-1}F(\xi^{\epsilon})\}\right]\\
&=\exp\left\{{\epsilon}^{-1}\left[F(\phi_0)-S(\phi_0)\right]\right\}\times\\
&\quad
\mathbb{E}^{z^{\epsilon}}\Big\{\exp\{Q(2,\eta^{\epsilon})\}\cdot\Big(H(\phi_0)+{\epsilon}^{1/2}\left[Q(3,\eta^{\epsilon})H(\phi_0)+H^{(1)}(\phi_0)(\eta^{\epsilon})\right]\\
&\qquad+\epsilon\left[\left(Q(4,\eta^{\epsilon})+\frac{1}{2}[Q(3,\eta^{\epsilon})]^2\right)H(\phi_0)+H^{(2)}(\phi_0)(\eta^{\epsilon},\eta^{\epsilon})+Q(3,\eta^{\epsilon})H^{(1)}(\phi_0)(\eta^{\epsilon})\right]\\
&\qquad+\cdots+{\epsilon}^{\frac{s-2}{2}}\tilde{\ell}(\epsilon,\eta^{\epsilon})\Big)\Big\}\\
&\,\,\,\,\,\,-\exp\left\{{\epsilon}^{-1}\left[F(\phi_0)-S(\phi_0)\right]\right\}\times\\
&\quad\quad
\mathbb{E}^{z^{\epsilon}}\Big\{1_{\left\{||\eta^{\epsilon}||\geq \epsilon^{-1/2}h\right\}}\exp\{Q(2,\eta^{\epsilon})\}\cdot\Big(H(\phi_0)+{\epsilon}^{1/2}\left[Q(3,\eta^{\epsilon})H(\phi_0)+H^{(1)}(\phi_0)(\eta^{\epsilon})\right]\\
&\qquad+\epsilon\left[\left(Q(4,\eta^{\epsilon})+\frac{1}{2}[Q(3,\eta^{\epsilon})]^2\right)H(\phi_0)+H^{(2)}(\phi_0)(\eta^{\epsilon},\eta^{\epsilon})+Q(3,\eta^{\epsilon})H^{(1)}(\phi_0)(\eta^{\epsilon})\right]\\
&\qquad+\cdots+{\epsilon}^{\frac{s-2}{2}}\bar{\ell}(\epsilon,\eta^{\epsilon})\Big)\Big\}\\
&\,\,\,\,\,\,+\exp\left\{{\epsilon}^{-1}\left[F(\phi_0)-S(\phi_0)\right]\right\}\cdot
E^{z^{\epsilon}}\left\{1_{\left\{||\eta^{\epsilon}||<\epsilon^{-1/2}h\right\}}\exp\{Q(2,\eta^{\epsilon})\}\times\Re_1(\epsilon,\eta^{\epsilon})\right\}
\end{aligned}
\end{equation}
for two functionals $\tilde{\ell}(\epsilon,\eta^{\epsilon})$ and $\bar{\ell}(\epsilon,\eta^{\epsilon}).$ Now it becomes quite clear that Theorem \ref{main-thm} is proved if the following (I)-(III) are proved:

\textbf{(I)}.
\begin{align}\label{goal-I}
\mathbb{E}^{z^{\epsilon}}\left\{1_{\left\{||\eta^{\epsilon}||<\epsilon^{-1/2}h\right\}}\exp\{Q(2,\eta^{\epsilon})\}\times\Re_1(\epsilon,\eta^{\epsilon})\right\}=o\left({\epsilon}^{\frac{s-2}{2}}\right).
\end{align}

\textbf{(II)}. Each term over the set $\left\{||\eta^{\epsilon}||\geq
\epsilon^{-1/2}h\right\}$ in
(\ref{first-step-4}) tends to zero exponentially fast.

\textbf{(III)}. The expectations $\mathbb{E}^{z^{\epsilon}}$ in
(\ref{first-step-4}) without $\left\{||\eta^{\epsilon}||\geq
\epsilon^{-1/2}h\right\}$ have precise asymptotic expansions. Obviously, this is the place where normal deviations for stochastic processes are used. To make (III) more precise, we need to show the following asymptotic expansions for normal deviations
\begin{equation*}
\begin{aligned}
&\mathbb{E}^{z^{\epsilon}}\left\{\exp\{Q(2,\eta^{\epsilon})\}\right\}=\mathbb{E}\left\{\exp\{Q(2,\eta)\}\right\}+{\epsilon}^{1/2}C_{01}+\epsilon C_{02}+\cdots+o({\epsilon}^{\frac{s-2}{2}}),\\
&\mathbb{E}^{z^{\epsilon}}\left\{\exp\{Q(2,\eta^{\epsilon})\}\left[Q(3,\eta^{\epsilon})H(\phi_0)+H^{(1)}(\phi_0)(\eta^{\epsilon})\right]\right\}\\
&\quad=\mathbb{E}\left\{\exp\{Q(2,\eta)\}\left[Q(3,\eta)H(\phi_0)+H^{(1)}(\phi_0)(\eta)\right]\right\}+{\epsilon}^{1/2}C_{11}+\epsilon C_{12}+\cdots+o(\epsilon^{\frac{s-3}{2}}),\\
&\mathbb{E}^{z^{\epsilon}}\left\{\exp\{Q(2,\eta^{\epsilon})\}\left[\left(Q(4,\eta^{\epsilon})+\frac{1}{2}[Q(3,\eta^{\epsilon})]^2\right)H(\phi_0)+H^{(2)}(\phi_0)(\eta^{\epsilon},\eta^{\epsilon})+Q(3,\eta^{\epsilon})H^{(1)}(\phi_0)(\eta^{\epsilon})\right]\right\}\\
&\quad=\mathbb{E}\left\{\exp\{Q(2,\eta)\}\left[\left(Q(4,\eta)+\frac{1}{2}[Q(3,\eta)]^2\right)H(\phi_0)\right.\right.\\
&\qquad\left.\left.+H^{(2)}(\phi_0)(\eta,\eta)+Q(3,\eta)H^{(1)}(\phi_0)(\eta)\right]\right\}+{\epsilon}^{1/2}C_{21}+\epsilon C_{22}+\cdots+o(\epsilon^{\frac{s-4}{2}})
\end{aligned}
\end{equation*}
and so on, where $C_{ij}$ are constants which can be determined by
Theorem \ref{theorem-from-SPA} in Section \ref{subsec:normal-deviation}. If we replace all
terms in~(\ref{first-step-4}) by these asymptotic expansions for normal deviations, then we get
\begin{align*}
&\mathbb{E}^{\epsilon}\left[H(\xi^{\epsilon})\exp\{{\epsilon}^{-1}F(\xi^{\epsilon})\}\right]=\mathbb{E}^{\epsilon}\left[||\xi^{\epsilon}-\phi_0||<h;H(\xi^{\epsilon})\exp\{{\epsilon}^{-1}F(\xi^{\epsilon})\}\right]\\
&\qquad\qquad\qquad\qquad\qquad\qquad\qquad\qquad+o(\exp\{{\epsilon}^{-1}[F(\phi_0)-S(\phi_0)-\gamma]\})\\
&=\exp\left\{{\epsilon}^{-1}\left[F(\phi_0)-S(\phi_0)\right]\right\}\cdot\left[\sum_{0\leq
i\leq
(s-2)}K_i{\epsilon}^{i/2}+o\left({\epsilon}^{(s-2)/2}\right)\right],
\end{align*}
which is exactly (\ref{final-formula}). That is, Theorem \ref{main-thm} is proved
if (I), (II), (III) and the finiteness of $K_i$ are proved.

\subsection{Proofs of (I)-(III)}\label{subsec:proofs-I-III}
\subsubsection{Proof of (I)}\label{proof-of-i}
It is clear that (I) will be proved if for any positive integer $k,$
\begin{align}\label{goal-I-prime}
\mathbb{E}^{z^{\epsilon}}\left\{1_{\left\{||\eta^{\epsilon}||<\epsilon^{-1/2}h\right\}}\exp\{Q(2,\eta^{\epsilon})\}\epsilon^{\frac{s-1}{2}}\left(1+\left|\eta_T^{\epsilon}\right|^{k}+\int_0^T\left|\eta_t^{\epsilon}\right|^{k}dt\right)\right\}=o\left({\epsilon}^{\frac{s-2}{2}}\right).
\end{align}
In order to prove (\ref{goal-I-prime}), we establish a lemma first.
\begin{lemma}\label{good-lemma}
Under the assumption (F), for any positive $k,$ there are
constants $c_1$ and $c_2$ (depending only on
$k$) such that for all $t\in[0,T],$ $1>{\epsilon}>0,$
\begin{align*}
 \mathbb{E}^{z^{\epsilon}}\left[\left(\eta^{\epsilon}_t\right)^{k}\right]\leq
 t\cdot c_1+c_1\cdot c_2\cdot\int_0^ts\cdot
 e^{c_2\cdot(t-s)}ds.
\end{align*}
\end{lemma}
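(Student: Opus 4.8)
The plan is to derive a Gr\"onwall-type integral inequality for the map $m_k(t):=\mathbb{E}^{z^{\epsilon}}[(\eta^{\epsilon}_t)^{k}]$ and then simply read off the asserted estimate. The starting point is the defining identity of the compensating operator applied to $f(x)=x^k$: since $\eta^{\epsilon}$ is, under $\mathbb{P}^{z^{\epsilon}}$, a Markov process with compensating operator $\mathfrak{A}^{\eta^{\epsilon}}$ given in (\ref{compen-eta-h}), one has $\mathbb{E}^{z^{\epsilon}}[f(\eta^{\epsilon}_t)]=f(0)+\int_0^t\mathbb{E}^{z^{\epsilon}}[\mathfrak{A}^{\eta^{\epsilon}}f(u,\eta^{\epsilon}_u)]\,du$. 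Because $x^k$ is unbounded, and hence not literally in the admissible class for the compensating operator, I would first run this with a stopping time $\tau_N=\inf\{t:|\eta^{\epsilon}_t|\ge N\}$ (or a smooth spatial truncation) and pass $N\to\infty$ by monotone convergence/Fatou at the end. The jump integral poses no integrability problem here, since each $\nu_{t,x}$ is supported in the fixed bounded set $K$.

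The heart of the matter is an explicit and $\epsilon$-uniform upper bound for $\mathfrak{A}^{\eta^{\epsilon}}(x^k)$. Expanding the jump term by the binomial theorem gives $(x+\epsilon^{1/2}u)^k-x^k-\epsilon^{1/2}kx^{k-1}u=\sum_{j=2}^k\binom{k}{j}x^{k-j}\epsilon^{j/2}u^j$, and multiplying by $\epsilon^{-1}$ and integrating produces $\sum_{j=2}^k\binom{k}{j}\epsilon^{(j-2)/2}x^{k-j}\int u^j e^{z_0(t)u}\nu_{t,\phi_0(t)+\epsilon^{1/2}x}(du)$; combining the $j=2$ piece with the diffusion term $\tfrac12 a\,k(k-1)x^{k-2}$ reconstitutes $\tfrac{k(k-1)}{2}x^{k-2}\alpha^2$. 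The decisive step is the drift term. Using that $\phi_0$ solves $x'(t)=\alpha^1(t,x(t))=\frac{\partial G_0}{\partial z}(t,\phi_0(t);z_0(t))$ (established at the beginning of Section~\ref{sec:proof}), the bracket in (\ref{compen-eta-h}) is the difference of $\frac{\partial G_0}{\partial z}$ evaluated at $\phi_0(t)+\epsilon^{1/2}x$ and at $\phi_0(t)$, so by the mean value theorem the singular $\epsilon^{-1/2}$ prefactor is exactly cancelled, leaving $x\,\frac{\partial^2 G_0}{\partial z\partial x}(t,\phi_0(t)+\theta\epsilon^{1/2}x;z_0(t))$, which is bounded by $C|x|$ thanks to $\sup|\partial^2 G_0/\partial z\partial x|<\infty$ in (F). Hence the drift contributes at most $kC|x|^k$.

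Invoking assumption (F) and the bounded support of $\nu$, together with the continuity (hence boundedness on $[0,T]$) of $z_0(t)$, all coefficients $a,\alpha^2,\dots,\alpha^k$ above are bounded uniformly in $t,x$ and in $\epsilon\in(0,1)$; since moreover $\epsilon^{(j-2)/2}\le 1$ for $j\ge 2$, the lower-order monomials are harmless and can be absorbed through $|x|^{k-j}\le 1+|x|^k$. This yields a bound of the clean form $\mathfrak{A}^{\eta^{\epsilon}}(x^k)\le c_1+c_2|x|^k$ with $c_1,c_2$ depending only on $k$. For odd $k$ I would carry out the argument with $f(x)=|x|^k$ (or bound $\mathbb{E}^{z^\epsilon}|\eta^{\epsilon}_t|^k$ by the next even moment); working with $|x|^k$ makes the sign issues of the intermediate powers $x^{k-1},x^{k-j}$ irrelevant.

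Substituting this bound into the compensating-operator identity and using $\eta^{\epsilon}_0=0$ gives $m_k(t)\le c_1 t+c_2\int_0^t m_k(u)\,du$. Applying the generalized Gr\"onwall inequality with $\alpha(t)=c_1 t$ and constant kernel $\beta\equiv c_2$ returns precisely $m_k(t)\le c_1 t+c_1 c_2\int_0^t s\,e^{c_2(t-s)}\,ds$, which is the claimed estimate. The main obstacle is the drift step: one must check carefully that the potentially singular $\epsilon^{-1/2}$ factor is genuinely neutralized by the mean value theorem, which is exactly where the identification of $\phi_0$ as the solution of the deterministic ODE and the uniform bound on $\partial^2 G_0/\partial z\partial x$ from (F) enter. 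A secondary, but purely technical, point is the justification of the compensating-operator (Dynkin) identity for the unbounded test function $x^k$ via the localization described above.
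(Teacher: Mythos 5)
Your proposal is correct and follows essentially the same route as the paper: the mean value theorem neutralizing the $\epsilon^{-1/2}$ drift prefactor via $\phi_0'(t)=\frac{\partial G_0}{\partial z}(t,\phi_0(t);z_0(t))$, the Taylor/binomial expansion of the jump integral over the bounded support $K$, the resulting bound $\mathfrak{A}^{\eta^{\epsilon}}f\leq c_1+c_2 f$ uniform in $\epsilon\in(0,1)$, and Gr\"onwall. The only difference is the device for handling the unboundedness of $x^k$: you localize with stopping times and pass to the limit by monotone convergence, whereas the paper works with the bounded test functions $f_n(x)=x^k/(1+(x/n)^k)$ for even $k$ and sends $n\to\infty$ after the Gr\"onwall step.
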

\begin{proof}
We consider a sequence of functions
$f_n(x)=\frac{x^k}{1+\left(x/n\right)^k}$ with an even positive
integer $k.$ Then, according to (\ref{compen-eta-h}), the generating
operator $A_t^{\eta^{\epsilon}}$ applying to $f_n$ gives
\begin{align*}
A_t^{\eta^{\epsilon}}f_n(x)&=\epsilon^{-1/2}\left[\frac{\partial
G_0}{\partial
z}(t,\phi_0(t)+\epsilon^{1/2}x;z_0(t))-\phi_0'(t)\right]f_n'(x)+\frac{1}{2}a(t,\phi_0(t)+\epsilon^{1/2}x)f''_n(x)\\
&\qquad+
{\epsilon}^{-1}\int_{\mathbb{R}}\left[f_n(x+\epsilon^{1/2}u)-f_n(x)-\epsilon^{1/2}f_n'(x)\cdot
u\right]e^{z_0(t)u}\nu_{t,\phi_0(t)+\epsilon^{1/2}x}(du)\\
&=\frac{\partial^2 G_0}{\partial z\partial
x}(t,\phi_0(t)+\theta_1\epsilon^{1/2}x;z_0(t))\cdot x\cdot f_n'(x)\\
&\qquad+\frac{f_n''(x)}{2}\left(\int_{\mathbb{R}}u^2e^{z_0(t)u}\nu_{t,\phi_0(t)+\epsilon^{1/2}x}(du)+a(t,\phi_0(t)+\epsilon^{1/2}x)\right)\\
&\qquad+\cdots+\frac{1}{k!}\int_{\mathbb{R}}\epsilon^{\frac{k-2}{2}}u^kf_n^{(k)}(x+\theta_2\epsilon^{1/2}u)e^{z_0(t)u}\nu_{t,\phi_0(t)+\epsilon^{1/2}x}(du),
\end{align*}
which is less than or equal to $c_3+c_4\cdot f_n(x),$ since $f_n^{(k)}$ is bounded, and $x\cdot
f'_n(x),f_n'',f_n''',\cdots,f_n^{(k-1)}$ are bounded by
$c_5+c_6\cdot f_n(x).$ So
\begin{align*}
\mathbb{E}^{z^{\epsilon}}\left[f_n\left(\eta^{\epsilon}_t\right)\right]\leq
f_n(0)+\int_0^t\left(c_1+c_2\cdot
\mathbb{E}^{z^{\epsilon}}\left[f_n\left(\eta^{\epsilon}_s\right)\right]\right)ds.
\end{align*}
Applying Gronwall's lemma with such nonnegative
$\mathbb{E}^{z^{\epsilon}}\left[f_n\left(\eta^{\epsilon}_t\right)\right],$ we
obtain
\begin{align*}
\mathbb{E}^{z^{\epsilon}}\left[f_n\left(\eta^{\epsilon}_t\right)\right]\leq
t\cdot c_1+c_1\cdot c_2\cdot\int_0^ts\cdot
e^{c_2\cdot(t-s)}ds,
\end{align*}
then the proof is done by sending $n$ to infinity.
\end{proof}
From this lemma, we have $\sup_{\epsilon\in(0,1)}
\mathbb{E}^{z^{\epsilon}}\left[(\eta^{\epsilon}_T)^{k}\right]<\infty$ and
$\sup_{\epsilon\in(0,1)}
\mathbb{E}^{z^{\epsilon}}\left[\int_0^T(\eta^{\epsilon}_t)^{k}dt\right]<\infty$
for any integer $k.$ These together with the fact
$\exp\{Q(2,\eta^{\epsilon})\}\leq1$ yield (\ref{goal-I-prime}).

\subsubsection{Proof of (II)}\label{proof-of-ii} It is immediate that
the first term goes to zero exponentially fast,
\begin{align*}
\mathbb{E}^{z^{\epsilon}}\left\{1_{\left\{||\eta^{\epsilon}||\geq
\epsilon^{-1/2}h\right\}}\exp\{Q(2,\eta^{\epsilon})\}\cdot
H(\phi_0)\right\}\rightarrow0\text{ exponentially fast},
\end{align*}
which is from (\ref{eponential-conv-zero}). Every other term has an upper bound by using estimates
(\ref{condition-on-F})-(\ref{condition-on-H}):
\begin{equation}\label{uniform-bound-expectation}
c\cdot
\mathbb{E}^{z^{\epsilon}}\left\{1_{\left\{||\eta^{\epsilon}||\geq
\epsilon^{-1/2}h\right\}}\epsilon^{\frac{j-2}{2}}\left(1+\left|\eta^{\epsilon}_T\right|^{k}+\int_0^T\left|\eta^{\epsilon}_t\right|^{k}dt\right)\right\},
\end{equation}
for some nonnegative constants $c$ and $k.$ By applying H\"{o}lder's
inequality to (\ref{uniform-bound-expectation}) and using Lemma
\ref{good-lemma}, it follows each term over the set $\left\{||\eta^{\epsilon}||\geq
\epsilon^{-1/2}h\right\}$ in
(\ref{first-step-4}) tends to zero exponentially fast.

\subsubsection{Proof of (III)}\label{proof-of-iii}
\textbf{(a) Proof of the first expansion}\\
The task is to prove
$$\mathbb{E}^{z^{\epsilon}}\left\{\exp\{Q(2,\eta^{\epsilon})\}\right\}=\mathbb{E}\left\{\exp\{Q(2,\eta)\}\right\}+{\epsilon}^{1/2}C_{01}+\epsilon C_{02}+\cdots+{\epsilon}^{(s-2)/2}C_{0(s-s)}+o({\epsilon}^{(s-2/2}).$$
Denoting
\begin{align*}
\widetilde{F}(x[0,T])=\exp\left\{Q(2,x[0,T])\right\}=&\exp\left\{\frac{1}{2}F^{(2)}(\phi_0)(x[0,T],x[0,T])\right.\\
&\left.+\int_0^T\frac{1}{2}(x(t))^2\frac{\partial^2
G_0}{\partial x^2}(t,\phi_0(t);z_0(t))dt\right\},
\end{align*} we need to check
such $\widetilde{F}(x[0,T])$ satisfies all conditions of Theorem \ref{theorem-from-SPA} in Section \ref{subsec:normal-deviation}.

Claim 1. $\widetilde{F}(x[0,T])$ is $3(s-2)$ times differentiable. It is easy to see that $\widetilde{F}$ is infinitely differentiable. Furthermore, the derivatives can be computed as follows
\begin{align*}
\widetilde{F}^{(1)}&(x[0,T])(\delta)=\lim_{h\rightarrow0}{h}^{-1}\left[\widetilde{F}(x[0,T]+h\delta)-\widetilde{F}(x[0,T])\right]\\
&=\widetilde{F}(x[0,T])\left(F^{(2)}(\phi_0)(x[0,T],\delta)+\int_0^Tx(t)\delta(t)\frac{\partial^2
G_0}{\partial x^2}(t,\phi_0(t);z_0(t))dt\right),\\
\widetilde{F}^{(2)}&(x[0,T])(\delta_1,\delta_2)=\lim_{h\rightarrow0}h^{-1}\left[\widetilde{F}^{(1)}(x[0,T]+h\delta_2)(\delta_1)-\widetilde{F}^{(1)}(x[0,T])(\delta_1)\right]\\
&=\widetilde{F}(x[0,T])\left(F^{(2)}(\phi_0)(x[0,T],\delta_2)+\int_0^Tx(t)\delta_2(t)\frac{\partial^2
G_0}{\partial x^2}(t,\phi_0(t);z_0(t))dt\right)\\
&\qquad\qquad\qquad\cdot\left(F^{(2)}(\phi_0)(x[0,T],\delta_1)+\int_0^Tx(t)\delta_1(t)\frac{\partial^2
G_0}{\partial x^2}(t,\phi_0(t);z_0(t))dt\right)\\
&\,\,+\widetilde{F}(x[0,T])\left(F^{(2)}(\phi_0)(\delta_1,\delta_2)+\int_0^T\delta_1(t)\delta_2(t)\frac{\partial^2
G_0}{\partial x^2}(t,\phi_0(t);z_0(t))dt\right)
\end{align*}
and so on.

Claim 2. $\widetilde{F}$ satisfies condition (I) of Theorem \ref{theorem-from-SPA}. First we have
$\left|\widetilde{F}(x[0,T])\right|\leq1.$ The derivatives of
$\widetilde{F}$ satisfy (I). For instance, here we check for
$\widetilde{F}^{(2)}.$
\begin{align*}
&\left|\widetilde{F}^{(2)}(x[0,T])(\delta,\delta)\right|\leq
\left(F^{(2)}(\phi_0)(x[0,T],\delta)+\int_0^Tx(t)\delta(t)\frac{\partial^2
G_0}{\partial x^2}(t,\phi_0(t);z_0(t))dt\right)^2+c(||\delta||),
\end{align*}
where $c(||\delta||)$ is a constant depending on the uniform norm of
$\delta.$ Taking into account the assumptions on $F^{(2)},$ the above is less
than or equal to
\begin{align*}
p^2\cdot\left(|x(T)\delta(T)|^m+(1+T||\delta||^n)(1+\int_0^T|x(t)|^{n}dt)+||\delta\frac{\partial^2
G_0}{\partial x^2}||\int_0^Tx(t)^2dt\right)^2+c(||\delta||).
\end{align*}
We apply H\"{o}lder
inequality several times to get an upper bound
$$c_1(||\delta||)\left(1+|x(T)|^{2m}+\int_0^T|x(t)|^{2n}dt\right).$$

Claim 3.
$\widetilde{F}^{(i)}(x[0,T])(I_{[t,T]}\delta,\cdots,I_{[t,T]}\delta),3\leq
i\leq3(s-2),$ are continuous with respect to $x[0,T]$ uniformly as
$x[0,T]$ changes over an arbitrary compact subset of $D_0[0,T],$ $t$
over $[0,T],$ and $\delta[0,T]$ over the set of Lipschitz continuous
functions with constant $1,$ $||\delta||\leq 1.$

It follows from~\cite{Skorohod-Limit-Theorem-1956} that any compact
subset of $D_0[0,T]$ is a bounded set in uniform topology. Taking
$|I_{[t,T]}\delta|\leq 1$ into account, Claim 3 is done easily.
For instance, the uniform continuity of
$F^{(2)}(\phi_0)(x[0,T],I_{[t,T]}\delta)$ in $x[0,T]$ can be
achieved as follows:
\begin{align*}
&\left|F^{(2)}(\phi_0)(y[0,T],I_{[t,T]}\delta)-F^{(2)}(\phi_0)(x[0,T],I_{[t,T]}\delta)\right|\\
&=\left|F^{(2)}(\phi_0)(y[0,T]-x[0,T],I_{[t,T]}\delta)\right|\leq
C||y-x||,
\end{align*}
where $C$ is independent of $x,y,t,\delta.$ Uniform continuity of
$F^{(2)}(\phi_0)(x[0,T],x[0,T])$ in $x[0,T]$ can be also obtained by
\begin{align*}
&\left|F^{(2)}(\phi_0)(y[0,T],y[0,T])-F^{(2)}(\phi_0)(x[0,T],x[0,T])\right|\\
&\leq
\left|F^{(2)}(\phi_0)(y[0,T]-x[0,T],y[0,T])\right|+\left|F^{(2)}(\phi_0)(y[0,T]-x[0,T],x[0,T])\right|\\
&\leq C\cdot\left(||y||+||x||\right)\cdot||x-y||\leq C\cdot
C_1\cdot||x-y||,
\end{align*}
where $C$ is independent of $x,y,$ and $C_1$ can be chosen
independently of $x,y,$ because a compact set is a bounded set in
uniform topology.\\
\textbf{(b) Proofs of the second expansion and the other expansions}\\
The second expansion is
\begin{align*}
&\mathbb{E}^{z^{\epsilon}}\left\{\exp\{Q(2,\eta^{\epsilon})\}\left[Q(3,\eta^{\epsilon})H(\phi_0)+H^{(1)}(\phi_0)(\eta^{\epsilon})\right]\right\}\\
&\quad=\mathbb{E}\left\{\exp\{Q(2,\eta)\}\left[Q(3,\eta)H(\phi_0)+H^{(1)}(\phi_0)(\eta)\right]\right\}+{\epsilon}^{1/2}C_{11}+\epsilon
C_{12}+\cdots+o(\epsilon^{(s-3)/2}).
\end{align*}
We use $\widehat{F}$ to denote
\begin{align*}
\widehat{F}(x[0,T])&=\exp\left\{Q(2,x[0,T])\right\}[Q(3,x[0,T])+H^{(1)}(\phi_0)(x[0,T])]\\
&=\widetilde{F}(x[0,T])\cdot [Q(3,x[0,T])+H^{(1)}(\phi_0)(x[0,T])].
\end{align*}

Claim 1. $\widehat{F}(x[0,T])$ is infinitely differentiable. The first two derivatives are
\begin{align*}
&\widehat{F}^{(1)}(x[0,T])(\delta)=\lim_{{h}\rightarrow0}{h}^{-1}\left[\widehat{F}(x[0,T]+h\delta)-\widehat{F}(x[0,T])\right]\\
&=\widetilde{F}^{(1)}(x[0,T])(\delta)\cdot[Q(3,x[0,T])+H^{(1)}(\phi_0)(x[0,T])]+\frac{3}{3!}\widetilde{F}(x[0,T])\\
&\quad\times\left(F^{(3)}(\phi_0)(x[0,T],x[0,T],\delta)+\int_0^Tx^2(t)\delta(t)\frac{\partial^3
G_0}{\partial
x^3}(t,\phi_0(t);z_0(t))dt+H^{(1)}(\phi_0)(\delta)\right),
\end{align*}
\begin{align*}
&\widehat{F}^{(2)}(x[0,T])(\delta_1,\delta_2)=\lim_{{h}\rightarrow0}{h}^{-1}\left[\widehat{F}^{(1)}(x[0,T]+h\delta_2)(\delta_1)-\widehat{F}^{(1)}(x[0,T])(\delta_1)\right]\\
&=\widetilde{F}^{(2)}(x[0,T])(\delta_1,\delta_2)\cdot[Q(3,x[0,T])+H^{(1)}(\phi_0)(x[0,T])]\\
&+\frac{3}{3!}\sum_{1\leq i\neq j\leq
2}\widetilde{F}^{(1)}(x[0,T])(\delta_j)\Big(F^{(3)}(\phi_0)(x[0,T],x[0,T],\delta_i)\\
&\qquad\qquad\qquad\qquad\qquad+\int_0^Tx^2(t)\delta_i(t)\frac{\partial^3
G_0}{\partial x^3}(t,\phi_0(t);z_0(t))dt+H'(\phi_0)(\delta_i)\Big)\\
&+\frac{6}{3!}\widetilde{F}(x[0,T])\left(F^{(3)}(\phi_0)(x[0,T],\delta_1,\delta_2)+\int_0^Tx(t)\delta_1(t)\delta_2(t)\frac{\partial^3
G_0}{\partial x^3}(t,\phi_0(t);z_0(t))dt\right).
\end{align*}

Claim 2. $\widehat{F}$ satisfies condition (I) of Theorem \ref{theorem-from-SPA}. We notice that
$Q(3,x[0,T])$ satisfies
\begin{align*}
&|Q(3,x[0,T])|=\left|\frac{1}{3!}F^{(3)}(\phi_0)(x[0,T],x[0,T],x[0,T])+\int_0^T\frac{1}{3!}x^3(t)\frac{\partial^3
G_0}{\partial x^3}(t,\phi_0(t);z_0(t))dt\right|\\
&\leq\frac{p}{3!}\left[|x(T)|^{3m}+\left(1+\int_0^T|x(t)|^{n}dt\right)^3\right]+\frac{1}{3!}\left|\left|\frac{\partial^3
G_0}{\partial x^3}\right|\right|\cdot\int_0^T|x(t)|^3dt.
\end{align*}
This together with~(\ref{condition-on-H}) imply that the first part
of (I) is fulfilled. For the second part of (I) on derivatives
of $\widehat{F},$ similar arguments can be applied. We can also
prove the following

Claim 3.
$\widehat{F}^{(i)}(x[0,T])(I_{[t,T]}\delta,\cdots,I_{[t,T]}\delta),3\leq
i\leq 3(s-3),$ are continuous with respect to $x[0,T]$ uniformly as
$x[0,T]$ changes over an arbitrary compact subset of $D_0[0,T],$ $t$
over $[0,T],$ and $\delta[0,T]$ over the set of Lipschitz continuous
functions with constant $1$ and $||\delta||\leq 1.$

The other expansions are proved in the same way.

\subsection{Finiteness of the coefficients}\label{subsec:finiteness-coefficient}
First it is obvious that $K_0$ is finite since $H$ is bounded and
$\exp\{Q(2,\eta)\}\leq1.$ For the rest, we will use Theorem 2.3.1
in \cite{Wentzell-LD-Markov-Processes-1986} to prove each finiteness.
The very first requirement of Theorem 2.3.1 is that
$G^{\eta}(t,x;z)$ satisfies condition (A) in Section \ref{sec:main-theorem}. Let us recall $G^{\eta}(t,x;z):$
\begin{align*}
G^{\eta}(t,x;z)&=zx\cdot\frac{\partial^2 G_0}{\partial z
\partial x}(t,\phi_0(t);z_0(t))+\frac{1}{2}z^2\cdot\frac{\partial^2 G_0}{\partial
z^2}(t,\phi_0(t);z_0(t)),
\end{align*}
which doesn't satisfies condition (A) obviously because of the
linear term in $x.$ So we turn to consider the following
transformation
$$\widetilde{\eta}_t=\exp\left\{-\int_0^tg(s)ds\right\}\eta_t,$$
where $g(t)=\frac{\partial^2 G_0}{\partial z\partial
x}(t,\phi_0(t);z_0(t)).$ Straightforward computation will give us
\begin{align*}
\mathfrak{A}^{\widetilde{\eta}}f(t,x)&=\frac{\partial f}{\partial
t}(t,x)+\frac{1}{2}\frac{\partial^2 G_0}{\partial
z^2}(t,\phi_0(t);z_0(t))\cdot\frac{\partial^2 f}{\partial
x^2}(t,x)\exp\left\{-2\int_0^tg(s)ds\right\},\\
G^{\widetilde{\eta}}(t,x;z)&=\frac{1}{2}z^2\cdot\frac{\partial^2
G_0}{\partial
z^2}(t,\phi_0(t);z_0(t))\exp\left\{-2\int_0^tg(s)ds\right\},\\
H^{\widetilde{\eta}}(t,x;u)&=\frac{1}{2}\frac{\partial^2
H_0}{\partial
u^2}(t,\phi_0(t);\phi_0'(t))\exp\left\{2\int_0^tg(s)ds\right\}u^2.
\end{align*}
Now $G^{\widetilde{\eta}}(t,x;z)$ satisfies condition (A). We will
apply Theorem 2.3.1 restricting ourself to
$G^{\widetilde{\eta}}(t,x;z).$ It is then shown that each finiteness
can be deduced from this.
\subsubsection{Finiteness of
$K_1$}\label{finiteness-k-1} We now show an auxiliary result.
\begin{lemma}\label{auxiliary-result}
For any positive integer $j,$
\begin{align}\label{auxiliary-estimate}
\mathbb{E}(||\widetilde{\eta}||^j)<\infty.
\end{align}
\end{lemma}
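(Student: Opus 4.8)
The plan is to exploit the explicit compensating operator $\mathfrak{A}^{\widetilde{\eta}}$ computed just above the lemma, which shows that $\widetilde{\eta}$ is drift-free and jump-free; hence under the law governing $\eta$ the process $\widetilde{\eta}_t$ is a continuous centered Gaussian martingale, distributed as a Wiener integral of variance $v(t)=\int_0^t A(s)\,ds$, where $A(s)=\frac{\partial^2 G_0}{\partial z^2}(s,\phi_0(s);z_0(s))\exp\{-2\int_0^s g(r)\,dr\}$. The key quantitative input is that $v(T)<\infty$. Indeed, $\frac{\partial^2 G_0}{\partial z^2}(s,\phi_0(s);z_0(s))=\alpha^2(s,\phi_0(s))=a(s,\phi_0(s))+\int u^2 e^{z_0(s)u}\nu_{s,\phi_0(s)}(du)$ is bounded on $[0,T]$: $a$ is bounded by assumption (F), $z_0(\cdot)$ is continuous (hence bounded) on $[0,T]$ since $\phi_0\in C_0^1[0,T]$, and $\nu_{s,x}$ is supported in the fixed bounded set $K$ with $\int u^2\nu_{s,x}(du)<\infty$; the factor $\exp\{-2\int_0^s g\}$ is bounded because $g(s)=\frac{\partial^2 G_0}{\partial z\partial x}(s,\phi_0(s);z_0(s))$ is bounded by (F). Thus $A$ is bounded on $[0,T]$ and $v(T)=\int_0^T A(s)\,ds<\infty$.

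Given this, I would first reduce the claim to even exponents. For an arbitrary positive integer $j$ choose an even integer $2k\ge j$; since $t\mapsto t^{j/(2k)}$ is concave on $[0,\infty)$, Jensen's inequality yields $\mathbb{E}(||\widetilde{\eta}||^{j})\le\big(\mathbb{E}(||\widetilde{\eta}||^{2k})\big)^{j/(2k)}$, so it suffices to bound the even moments $\mathbb{E}(||\widetilde{\eta}||^{2k})$.

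For the even moments I would invoke the martingale structure. By Doob's $L^{2k}$ maximal inequality applied to the continuous martingale $\widetilde{\eta}$,
\begin{align*}
\mathbb{E}\Big(\sup_{0\le t\le T}|\widetilde{\eta}_t|^{2k}\Big)\le\Big(\frac{2k}{2k-1}\Big)^{2k}\mathbb{E}\big(|\widetilde{\eta}_T|^{2k}\big).
\end{align*}
Since $\widetilde{\eta}_T$ is centered Gaussian with variance $v(T)<\infty$, its moments are explicit, $\mathbb{E}(|\widetilde{\eta}_T|^{2k})=(2k-1)!!\,v(T)^{k}<\infty$, which gives $\mathbb{E}(||\widetilde{\eta}||^{2k})<\infty$ and hence $(\ref{auxiliary-estimate})$. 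One could equally appeal to Fernique's theorem: the continuity and centeredness of the Gaussian process $\widetilde{\eta}$ together with $\sup_{t}v(t)<\infty$ give $\mathbb{E}\exp\{\lambda||\widetilde{\eta}||^2\}<\infty$ for small $\lambda>0$, from which all polynomial moments are finite.

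The only step demanding care, and thus the main obstacle, is the verification that the diffusion coefficient $A$ is bounded on $[0,T]$ so that $v(T)<\infty$; once the boundedness of $a$, of $z_0(\cdot)$, and of $g$, together with the bounded support of $\nu_{s,x}$, are in place, the remainder is a routine application of Doob's inequality (or Fernique's theorem) to a Gaussian martingale.
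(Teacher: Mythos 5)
Your proof is correct, but it takes a genuinely different route from the paper's. The paper does not exploit the Gaussian--martingale structure of $\widetilde{\eta}$ at all: it decomposes $D_0[0,T]$ into the shells $\Phi_0^{\widetilde{\eta}}(m+1)_{+\alpha\sqrt{m+1}}\setminus\Phi_0^{\widetilde{\eta}}(m)_{+\alpha\sqrt{m}}$ built from level sets of the action functional $I^{\widetilde{\eta}}$ of the family $\sqrt{\epsilon}\,\widetilde{\eta}$, bounds $\sup\{\|f\|^j: f\in\Phi_0^{\widetilde{\eta}}(m+1)_{+\alpha\sqrt{m+1}}\}$ by $A\exp\{(m+1)/3\}$, and controls $\mathbb{P}\{\widetilde{\eta}\notin\Phi_0^{\widetilde{\eta}}(m)_{+\alpha\sqrt{m}}\}$ by $4n e^{-m}+N^n e^{-m/2}$ via the quantitative large-deviation upper bound of Theorem 2.3.1 in \cite{Wentzell-LD-Markov-Processes-1986}, after a lengthy verification of that theorem's hypotheses with carefully tuned parameters $Z, n, \kappa, \epsilon_1, \epsilon_2$; summing the resulting series gives the lemma. (This is precisely why the transformation from $\eta$ to $\widetilde{\eta}$ is performed in the first place: condition (A) fails for $G^{\eta}$ because of the linear drift term.) You instead read off from $\mathfrak{A}^{\widetilde{\eta}}$ that $\widetilde{\eta}$ is a continuous centered Gaussian martingale with deterministic variance $v(t)=\int_0^t A(s)\,ds$ and conclude by Doob's $L^{2k}$ inequality together with explicit Gaussian moments (or by Fernique). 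The one point that genuinely needs checking in your argument --- boundedness of $A$ on $[0,T]$ --- does hold: the paper itself posits $\sup_t|A(t)|\le c$, and it follows from the continuity of $\partial^2 G_0/\partial z^2$ in $(t,x;z)$ required by (F) together with the continuity of $t\mapsto(t,\phi_0(t),z_0(t))$ on the compact interval $[0,T]$. Your route is shorter and more elementary, and in fact yields the stronger conclusion $\mathbb{E}\exp\{\lambda\|\widetilde{\eta}\|^2\}<\infty$ for small $\lambda>0$; the paper's route is heavier but reuses the same large-deviation machinery already deployed for the prelimit processes and does not depend on the limit being an explicit Gaussian martingale, so it would survive in settings where the limit law is less tractable.
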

\begin{proof}
The normalized action functional for the family of processes
$\sqrt{\epsilon}\cdot\widetilde{\eta}$ is
\begin{align*}
I^{\widetilde{\eta}}(f(\cdot))=\int_0^TH^{\widetilde{\eta}}(t,f(t);f'(t))dt.
\end{align*}
Let us consider, for some positive $\alpha,$ positive integer $m,$
\begin{align*}
&\Phi_0^{\widetilde{\eta}}(m)=\left\{f\in
D_0[0,T]:I^{\widetilde{\eta}}(f(\cdot))\leq
m\right\};\\
&\Phi_0^{\widetilde{\eta}}(m)_{+\alpha\sqrt{m}}:
\alpha\sqrt{m}-\text{neighbourhood of }\Phi_0^{\widetilde{\eta}}(m).
\end{align*}
The space $D_0[0,T]$ decomposes into the union
$$\Phi_0^{\widetilde{\eta}}(1)_{+\alpha}\cup\bigcup_{m=1}^{\infty}\Phi_0^{\widetilde{\eta}}(m+1)_{+\alpha\sqrt{m+1}}\backslash\Phi_0^{\widetilde{\eta}}(m)_{+\alpha\sqrt{m}},$$
thus we have
\begin{equation}\label{oct-16-2}
\begin{aligned}
\mathbb{E}(||\widetilde{\eta}||^j)&\leq \sum_{m=0}^{\infty} \mathbb{E}\left\{\widetilde{\eta}\in \Phi_0^{\widetilde{\eta}}(m+1)_{+\alpha\sqrt{m+1}}\backslash\Phi_0^{\widetilde{\eta}}(m)_{+\alpha\sqrt{m}};||\widetilde{\eta}||^j\right\}\\
&\leq \sum_{m=0}^{\infty}
\mathbb{P}\left\{\widetilde{\eta}\notin\Phi_0^{\widetilde{\eta}}(m)_{+\alpha\sqrt{m}}\right\}\cdot\sup\left\{||f||^j:
f\in\Phi_0^{\widetilde{\eta}}(m+1)_{+\alpha\sqrt{m+1}}\right\}.
\end{aligned}
\end{equation}
We first analyze the supremum term in~(\ref{oct-16-2}). Let us recall
a fact that, for any fixed $a>0,$ any integer $l>0,$ there exists a
constant $A>0$ (only depends on $a$ and $l,$ independent of $x$)
such that
\begin{align}\label{importnat-estimate-exponential}
|x|^l\leq Ae^{ax^2},\quad\text{ for all }x\in \mathbb{R}.
\end{align}
Thus, for any $a>0,$ any positive integer $j,$ there is some
$A=A(a,j)>0$ such that
\begin{align*}
||f||^j\leq A\exp\left\{a||f||^2\right\}.
\end{align*}
And for any
$f\in\Phi_0^{\widetilde{\eta}}(m+1)_{+\alpha\sqrt{m+1}},$ we can
choose a small $a$ such that (such $a$ can be chosen independent of
$m$ by using Lemma 5.2.5 on in \cite{Wentzell-LD-Markov-Processes-1986})
$$a||f||^2\leq \frac{1}{3}(m+1).$$
So the supremum term can be estimated as follows
\begin{align}\label{good-1}
\sup\left\{||f||^j:
f\in\Phi_0^{\widetilde{\eta}}(m+1)_{+\alpha\sqrt{m+1}}\right\}\leq A
\exp\left\{\frac{1}{3}\cdot(m+1)\right\}.
\end{align}

Now we analyze the probabilities in~(\ref{oct-16-2}) by using
Theorem 2.3.1 in \cite{Wentzell-LD-Markov-Processes-1986}. We check all the
conditions of Theorem 2.3.1 as follows. We suppose
$\sup_t|\frac{\partial^2 G_0}{\partial
z^2}(t,\phi_0(t);z_0(t))\exp\left\{-2\int_0^tg(s)ds\right\}|\leq c$
for some constant $c.$ Let us consider a constant $Z,$ an integer
$n,$ whose values will be determined a little later. We set
$\epsilon_2=\frac{mZ^2c^2\kappa}{6T}, t_i=\frac{iT}{n},\triangle
t_{\min}=\triangle t_{\max}=\frac{T}{n},k=2,$
$z(1)=\sqrt{m}Z,z(2)=-\sqrt{m}Z,$ $d(1)=d(2)=mZ^2c,$
$\delta'=\frac{\alpha\sqrt{m}}{3}, A=m,$
$$U_0=\left\{u:z(j)\cdot u<d(j),j=1,2\right\}=\left(-\sqrt{m}Zc,\sqrt{m}Zc\right).$$
Now we define a small $\epsilon_1$ such that
$\epsilon_1(2-\epsilon_1)+T\epsilon_1(3-\epsilon_1)+\frac{Z^2c^2\kappa}{6}(1-\epsilon_1)\leq
\frac{Z^2c^2\kappa}{3}.$ Firstly we know $G^{\widetilde{\eta}}$
satisfies condition $A$ with $\overline{G}(z)=\frac{1}{2}z^2c.$
Secondly $G^{\widetilde{\eta}}$ has the property:
\begin{align*}
G^{\widetilde{\eta}}(t,y;(1-\epsilon_1)z)\leq
(1-\epsilon_1)G^{\widetilde{\eta}}(s,x;z),
\end{align*}
for $t,s$ which are close enough (this can be guaranteed by choosing
a large $n$). Finally we can approximate $H^{\widetilde{\eta}}$ on
$U_0$ by tangent lines from below
with any accuracy. More precisely, we can obviously find some
$z_0\{1\},\cdots,z_0\{N\}$ such that
\begin{align*}
&\sup_{u\in [-1,1]}\left(H^{\widetilde{\eta}}(t,x;u)-\max_{1\leq
j\leq
N}\left[z_0\{j\}u-G^{\widetilde{\eta}}(t,x;z_0\{j\})\right]\right)\leq
\frac{\kappa}{6T}.
\end{align*}
For general $u\in U_0,$ we set $z\{j\}=\sqrt{m}Zcz_0\{j\},1\leq
j\leq N$ (here $N$ can be chosen independent of $m$). For short, we will use
$h_1(t)=\frac{\partial^2H_0}{\partial
u^2}(t,\phi_0(t);\phi_0'(t))\exp\{2\int_0^tg(s)ds\}$ and
$h_2(t)=\frac{\partial^2G_0}{\partial
z^2}(t,\phi_0(t);z_0(t))\exp\{-2\int_0^tg(s)ds\}.$ Then
\begin{align*}
&\sup_{u\in U_0}\left(H^{\widetilde{\eta}}(t,x;u)-\max_{1\leq j\leq
N}\left[z\{j\}u-G^{\widetilde{\eta}}(t,x;z\{j\})\right]\right)\\
&=\sup_{u\in
\left(-\sqrt{m}Zc,\sqrt{m}Zc\right)}\left(\frac{1}{2}h_1(t)u^2-\max_{1\leq
j\leq
N}\left[z\{j\}u-\frac{1}{2}z\{j\}^2h_2(t)\right]\right)\\
&=mZ^2c^2\cdot\sup_{u\in
\left(-\sqrt{m}Zc,\sqrt{m}Zc\right)}\Big(\frac{1}{2}h_1(t)\left(\frac{u}{\sqrt{m}Zc}\right)^2-\\
&\qquad\qquad\qquad\qquad\qquad\qquad\qquad-\max_{1\leq j\leq
N}\left[\frac{z\{j\}}{\sqrt{m}Zc}\frac{u}{\sqrt{m}Zc}-\frac{1}{2}\left(\frac{z\{j\}}{\sqrt{m}Zc}\right)^2h_2(t)\right]\Big)\\
&= mZ^2c^2\cdot\sup_{u\in
(-1,1)}\left(\frac{1}{2}h_1(t)u^2-\max_{1\leq j\leq
N}\left[z_0\{j\}u-\frac{1}{2}z_0\{j\}^2h_2(t)\right]\right)\\
&\leq \frac{mZ^2c^2\cdot\kappa}{6T}=\epsilon_2.
\end{align*}
All conditions of Theorem 2.3.1 are thus checked. Applying this
theorem with $\delta'\geq \frac{T}{n}\sqrt{m}Zc,$ i.e.
$n\geq\frac{3ZTc}{\alpha},$ we get
\begin{align*}
&\mathbb{P}\left\{\widetilde{\eta}\notin\Phi_0^{\widetilde{\eta}}(m)_{+\alpha\sqrt{m}}\right\}=\mathbb{P}\left\{\text{dist}\left(\eta^{\epsilon},\Phi_0^{\widetilde{\eta}}(m)\right)\geq\alpha\sqrt{m}\right\}\\
&\leq4n\exp\left\{\frac{T}{n}\left[\frac{Z^2c m}{2}-Z^2c m\right]\right\}\\
&\qquad\qquad+N^n\exp\left\{-m+m\epsilon_1(2-\epsilon_1)+T\left(m\epsilon_1(3-\epsilon_1)+\frac{mZ^2c^2\kappa}{6T}(1-\epsilon_1)\right)\right\}\\
&=4n\exp\left\{-m\left[\frac{TZ^2c}{2n}\right]\right\}\\
&\qquad\qquad+N^n\exp\left\{-m\left[1-\epsilon_1(2-\epsilon_1)-T\epsilon_1(3-\epsilon_1)-\frac{Z^2c^2\kappa}{6}(1-\epsilon_1)\right]\right\}\\
&\leq4n\exp\left\{-m\left[\frac{TZ^2c}{2n}\right]\right\}+N^n\exp\left\{-m\left(1-\frac{Z^2c^2\kappa}{3}\right)\right\},\text{
definition of
}\epsilon_1\\
&=4n\exp\left\{-m\right\}+N^n\exp\left\{-m\left(1-\frac{Z^2c^2\kappa}{3}\right)\right\},\text{
choose }Z=\sqrt{\frac{2n}{Tc}}, n\geq \frac{18T}{\alpha^2c}.
\end{align*}
Noticing that $\frac{Z^2c^2\kappa}{3}$ can be as small as possible
by choosing a small $\kappa,$ we thus assume
$1-\frac{Z^2c^2\kappa}{3}>1/2.$ Then it follows
$$\mathbb{P}\left\{\widetilde{\eta}\notin\Phi_0^{\widetilde{\eta}}(m)_{+\alpha\sqrt{m}}\right\}\leq4n\exp\left\{-m\right\}+N^n\exp\left\{-\frac{m}{2}\right\}.$$
Let us now go back to~(\ref{oct-16-2}) combining above estimate and
(\ref{good-1})
\begin{align*}
\mathbb{E}(||\widetilde{\eta}||^j)&\leq
\sum_{m=0}^{\infty}\left(4n\exp\left\{-m\right\}+N^n\exp\left\{-\frac{m}{2}\right\}\right)A
\exp\left\{\frac{1}{3}\cdot(m+1)\right\}\\
&\leq
Ae^{1/3}(4n+N^n)\sum_{m=0}^{\infty}\exp\left\{-\frac{m}{6}\right\}<\infty.
\end{align*}
\end{proof}
By observing the transformation
$\widetilde{\eta}_t=\exp\left\{-\int_0^tg(s)ds\right\}\eta_t,$ we
immediately derive
\begin{align*}
\mathbb{E}(||\eta||^j)\leq
Be^{1/3}(4n+N^n)\sum_{m=0}^{\infty}\exp\left\{-\frac{m}{6}\right\}<\infty.
\end{align*}
It is clear that $K_1$ can be bounded by expectation of
$c_1+c_2\cdot||\eta||^j$ for some $j, c_1$ and $c_2,$ from
which finiteness of $K_1$ follows according to Lemma
\ref{auxiliary-result}.
\subsubsection{Finiteness of the rest of the
coefficients} Since all derivatives of $F$ and $H$ are bounded
symmetric linear functionals, we can use Lemma
\ref{auxiliary-result} to prove the finiteness of the rest of the
coefficients.

\section{Connections with partial integro-differential equations}\label{sec:Math-Physics}
The connections are between large (or normal) deviations and solutions to
\begin{equation}\label{eq:Math-Physics-general}
\begin{cases}
\begin{aligned}
\frac{\partial}{\partial t}u^{\epsilon}(t,x)=&\frac{\epsilon}{2}a(t,x)\Delta u^{\epsilon}(t,x)+b(t,x)\nabla u^{\epsilon}(t,x)+\epsilon^{-1}c(x)u^{\epsilon}(t,x)\\
&+\epsilon^{-1}\int_{\mathbb{R}}\left[u^{\epsilon}(t,x+\epsilon u)-u^{\epsilon}(t,x)-\epsilon u\nabla u^{\epsilon}(t,x)\right]\nu_{t,x}(du)
\end{aligned}\\
u^{\epsilon}(0,x)=g(x)
\end{cases}
\end{equation}
over $(t,x)\in \mathbb{R}_{+}\times \mathbb{R}.$ More precisely, it is expected that
\begin{equation}\label{eq:connections}
u^{\epsilon}(t,x)=\mathbb{E}^{\epsilon}_{0,x}\left[g(\xi^{\epsilon}_t)\exp\left\{\epsilon^{-1}\int_0^tc(\xi^{\epsilon}_s)ds\right\}\right],
\end{equation}
then the precise asymptotics for large deviations (or normal deviations) developed in Theorem \ref{main-thm} (or Theorem \ref{theorem-from-SPA}) can be applied. Of course, formula (\ref{eq:connections}) is not always true unless suitable conditions are imposed. In the first part of this section, we prove (\ref{eq:connections}) for a special case when $c(x)$ is a constant. Then from Theorem \ref{main-thm}, it follows
$$u^{\epsilon}(t,x)=e^{t/{\epsilon}}\cdot\left[\sum_{k=0}^nk_i(x) \epsilon^{k/2}+o(\epsilon^{n/2})\right].$$
The second part of this section is on the study of precise asymptotics of $u^{\epsilon}(t,x)$ in more general settings.

\subsection{The specific case}
In (\ref{eq:Math-Physics-general}), we set $c=1$ and $\nu_{t,x}(du)=u^21_{\{|u|\leq1\}}(du).$ What is more, we assume $a(t,x)=a(x),$ $b(t,x)=b(x),$ and $0<\inf_{x}a(x)\leq \sup_{x}a(x)<\infty,$ the smooth functions $a(x), b(x)$ and $g(x)$ are bounded together with their derivatives $d^j a/d x^j,$ $d^j b/d x^j$ and $d^j g/d x^j.$ In this case, we consider a family of jump processes $\xi^{\epsilon}$ with generating operators
$$A^{\epsilon}f(x)=\frac{\epsilon}{2}a(x)f''(x)+b(x)f'(x)+\epsilon^{-1}\int_{-1}^1\left[f(x+\epsilon u)-f(x)\right]u^2du$$
for continuous bounded $f$ together with its first and second derivatives. From the theory of semigroups, the function $$v^{\epsilon}(t,x):=\mathbb{E}^{\epsilon}_{0,x}f(\xi^{\epsilon}_t)$$ is the unique solution to the problem, for $f$ in the domain of $A^{\epsilon},$
\begin{equation*}
\begin{cases}
\begin{aligned}
\frac{\partial}{\partial t}v^{\epsilon}(t,x)=A^{\epsilon}v^{\epsilon}(t,x),
\end{aligned}\\
v^{\epsilon}(0,x)=f(x).
\end{cases}
\end{equation*}
Now it is easy to see that $u^{\epsilon}(t,x):=e^{t/{\epsilon}}\cdot\mathbb{E}^{\epsilon}_{0,x}f(\xi^{\epsilon}_t)$ is the unique solution of
\begin{equation*}
\begin{cases}
\begin{aligned}
\frac{\partial}{\partial t}u^{\epsilon}(t,x)=A^{\epsilon}u^{\epsilon}(t,x)+\epsilon^{-1}u^{\epsilon}(t,x),
\end{aligned}\\
u^{\epsilon}(0,x)=f(x).
\end{cases}
\end{equation*}
The conditions imposed on $a,b$ and $g$ are mainly for the smooth and growth assumptions in Theorem \ref{main-thm}, such as (F) and (\ref{condition-on-H}). The condition $c(x)=1$ is crucial in this special case since it forces the $\max[F-S]$ is reached uniquely at $\phi_0\equiv0.$
This example should be considered as the asymptotics for normal (not large) deviations since the main part of the integral (\ref{eq:connections}) is due to the most probable sample path (which is identically zero). Asymptotics for large (not normal) deviations can be seen below.

\subsection{In more general settings}
Let $\xi^{\epsilon}$ now be the locally infinitely divisible family of processes considered in Theorem \ref{main-thm} satisfying all the assumptions. Then the corresponding partial integro-differential equation is (\ref{eq:Math-Physics-general}) with $b(t,x)$ replaced by $\alpha(t,x).$ In order to show (\ref{eq:connections}), it is natural to impose suitable conditions on two new functions $c(x)$ and $g(x).$ What is more, more conditions on the processes are also expected. This leads to a theorem borrowed from \cite{wentzell-a-course}.
\begin{theorem}[Section 10.3 in \cite{wentzell-a-course}]
Let $\xi^{\epsilon}$ be uniformly stochastically continuous, the function $g(x)$ be in the domain of the generating operator $A^{\epsilon}_t,$ and the function $c(x)$ be bounded uniformly continuous. Then the function given by (\ref{eq:connections}) is the unique solution of (\ref{eq:Math-Physics-general}).
\end{theorem}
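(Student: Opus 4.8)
The plan is to prove both halves of the statement---that the function $u^{\epsilon}$ defined by (\ref{eq:connections}) solves the Cauchy problem (\ref{eq:Math-Physics-general}) (with $b$ replaced by $\alpha$), and that it is the only solution in the prescribed class---by the Feynman--Kac/Dynkin scheme built on the compensating operator of Section \ref{subsec:locally-infinitely-divisible}. Set $E_t^{\epsilon}=\exp\{\epsilon^{-1}\int_0^t c(\xi_s^{\epsilon})\,ds\}$. I would first record that $u^{\epsilon}$ is well defined and bounded on every strip $[0,T]\times\mathbb{R}$: boundedness of $c$ gives $E_t^{\epsilon}\le e^{\epsilon^{-1}T\|c\|_{\infty}}$ on $[0,T]$, and $g$ is bounded (being in the domain of $A_t^{\epsilon}$), so the expectation in (\ref{eq:connections}) is finite. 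This boundedness is exactly what legitimizes the interchanges of differentiation and expectation used below.

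For existence I would exploit that $E_t^{\epsilon}$ is continuous and of finite variation, with $dE_t^{\epsilon}=\epsilon^{-1}c(\xi_t^{\epsilon})E_t^{\epsilon}\,dt$. Introduce the two-parameter propagator $T_c^{s,t}f(x)=\mathbb{E}_{s,x}[f(\xi_t^{\epsilon})\exp\{\epsilon^{-1}\int_s^t c(\xi_r^{\epsilon})\,dr\}]$; the Markov property together with the multiplicativity of $E^{\epsilon}$ gives the composition rule $T_c^{s,u}\circ T_c^{u,t}=T_c^{s,t}$. Combining the short-time expansion $\mathbb{E}_{t,y}[f(\xi_{t+h}^{\epsilon})]=f(y)+hA_t^{\epsilon}f(y)+o(h)$ coming from (\ref{compensating-op-definition})--(\ref{infi-divi-section-1-2}) with $\exp\{\epsilon^{-1}\int_t^{t+h}c(\xi_r^{\epsilon})\,dr\}=1+\epsilon^{-1}h\,c(y)+o(h)$ gives $T_c^{t,t+h}f=f+h(A_t^{\epsilon}f+\epsilon^{-1}cf)+o(h)$, so differentiation produces the evolution governed by $A_t^{\epsilon}+\epsilon^{-1}c$; the initial condition follows from $E_0^{\epsilon}=1$. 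The rigorous and more robust version of this, handling the nonlocal part, is the Dynkin identity obtained by integrating the finite-variation factor $E_s^{\epsilon}$ against the compensated functional $f(s,\xi_s^{\epsilon})-\int_0^s\mathfrak{A}f\,dr$: for smooth space--time $\Phi$ the process $\Phi(s,\xi_s^{\epsilon})E_s^{\epsilon}$ has drift $[\mathfrak{A}\Phi+\epsilon^{-1}c\,\Phi]E_s^{\epsilon}$. Choosing $\Phi(s,x)=\mathbb{E}_{s,x}[g(\xi_t^{\epsilon})\exp\{\epsilon^{-1}\int_s^t c\,dr\}]$, the propagator to a fixed terminal time $t$, makes this drift vanish---the backward equation associated with the operator $A_t^{\epsilon}+\epsilon^{-1}c$ of (\ref{eq:Math-Physics-general})---and evaluation at $s=0$ returns (\ref{eq:connections}).

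For uniqueness, let $v$ be any solution with the stated regularity and growth. The same drift identity exhibits the space--time process built from $v$ and $E^{\epsilon}$, with the time orientation discussed below, as a local martingale whose drift cancels because $v$ satisfies (\ref{eq:Math-Physics-general}); boundedness of $c$ and moment bounds obtained by Gronwall's inequality, in the manner of Lemma \ref{good-lemma}, promote it to a genuine martingale. Equating its expectations at the two time endpoints then forces $v(t,x)=\mathbb{E}_{0,x}[g(\xi_t^{\epsilon})E_t^{\epsilon}]=u^{\epsilon}(t,x)$, so the solution is unique.

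The hard part will be the passage from local to true martingale and the attendant interchanges of limit, integral and expectation, which rest on two estimates. The nonlocal term $\epsilon^{-1}\int_{\mathbb{R}}[v(t,x+\epsilon u)-v(t,x)-\epsilon u\,\nabla v(t,x)]\nu_{t,x}(du)$ must be finite and integrable along trajectories; here the standing hypotheses $\int u^{2}\nu_{t,x}(du)<\infty$ and the bounded support $K$ of $\nu_{t,x}$, together with $g$ being in the domain of $A_t^{\epsilon}$, are exactly what make $A_t^{\epsilon}v$ well defined and the jump part of the compensator integrable. The exponential weight must stay integrable, which boundedness of $c$ secures, while uniform continuity of $c$ and uniform stochastic continuity of $\xi^{\epsilon}$ underwrite the short-time asymptotics $\int_t^{t+h}c(\xi_r^{\epsilon})\,dr=h\,c(x)+o(h)$ required to differentiate in $t$. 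I expect the bookkeeping of the time orientation in the genuinely time-inhomogeneous case---forward Cauchy datum versus backward propagator---to be the most delicate point; it is transparent when the coefficients are independent of $t$, as in the specific case treated above, and in general is handled as in Section 10.3 of \cite{wentzell-a-course}.
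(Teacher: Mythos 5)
The paper does not prove this statement: it is imported verbatim, with attribution, from Section 10.3 of \cite{wentzell-a-course}, so there is no internal proof to compare yours against. Your Feynman--Kac/Dynkin scheme (multiply by the finite-variation factor $E_t^{\epsilon}$, use the compensating operator to identify the drift of $\Phi(s,\xi_s^{\epsilon})E_s^{\epsilon}$ as $[\mathfrak{A}\Phi+\epsilon^{-1}c\,\Phi]E_s^{\epsilon}$, and run the same identity backwards for uniqueness) is the standard route and is essentially what the cited source does; the integrability points you isolate (bounded $c$, bounded support of $\nu_{t,x}$, $g$ in the domain of $A_t^{\epsilon}$) are the right ones.

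There is, however, one concrete gap that your own hedging does not close. The short-time expansion you write, $T_c^{t,t+h}f=f+h(A_t^{\epsilon}f+\epsilon^{-1}cf)+o(h)$, is applied at the \emph{right} endpoint of the time interval. Feeding it through the Markov property gives
$$\frac{\partial}{\partial t}\,\mathbb{E}_{0,x}\bigl[g(\xi_t^{\epsilon})E_t^{\epsilon}\bigr]=\mathbb{E}_{0,x}\bigl[E_t^{\epsilon}\,(A_t^{\epsilon}g+\epsilon^{-1}cg)(\xi_t^{\epsilon})\bigr],$$
i.e.\ the operator lands on $g$ \emph{inside} the expectation (the analogue of $\partial_t P^{0,t}g=P^{0,t}A_tg$). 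This is not the equation (\ref{eq:Math-Physics-general}), where $A_t^{\epsilon}$ acts on $u^{\epsilon}(t,\cdot)$ at the starting point $x$ and the zeroth-order term is $\epsilon^{-1}c(x)u^{\epsilon}(t,x)$ with $c$ evaluated at $x$, not at $\xi_t^{\epsilon}$. To obtain the stated PDE you must instead condition over the \emph{initial} increment $[0,h]$, writing $u^{\epsilon}(t+h,x)=\mathbb{E}_{0,x}\bigl[\exp\{\epsilon^{-1}\int_0^hc(\xi_r^{\epsilon})dr\}\,u^{\epsilon}(t,\xi_h^{\epsilon})\bigr]$; this identity is exact only when the coefficients are independent of $t$ (as in the special case of Section 5.1), since with time-dependent $a,b,\nu_{t,x}$ the restarted process after time $h$ no longer has the law needed to reproduce $u^{\epsilon}(t,\cdot)$. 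You correctly identify this forward-datum-versus-backward-propagator mismatch as the delicate point, but then defer it to Section 10.3 of \cite{wentzell-a-course} --- the very source the theorem is quoted from --- so the main step of the existence half (and the matching time-reversal in the uniqueness half) is asserted rather than proved. Either restrict to time-independent coefficients, or carry out the two-parameter propagator argument explicitly; as written the proposal does not establish the time-inhomogeneous case.
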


For (\ref{eq:connections}), we need further assumptions in order to apply Theorem \ref{main-thm}. For instance, it is assumed that $\max\left[\int_0^tc(\phi(s))-H_0ds\right]$ is reached uniquely at non-zero $\phi_0$.

\section{Appendix}\label{appendix}
\subsection{Compensating operators after transformations}
In preceding sections, we presented many compensating operators after transformations without any proofs. In order to show the method, we give the details for deriving the compensating operator appeared in Section \ref{subsec:finiteness-coefficient}. There, Gaussian process $\eta$ was
considered with compensating operator
\begin{align*}
\mathfrak{A}^{\eta}f(t,x)=\frac{\partial f}{\partial
t}(t,x)+x\cdot\frac{\partial^2 G_0}{\partial z
\partial x}(t,\phi_0(t);&z_0(t))\cdot\frac{\partial
f}{\partial x}(t,x)+ \frac{1}{2}\frac{\partial^2 G_0}{\partial
z^2}(t,\phi_0(t);z_0(t))\cdot\frac{\partial^2 f}{\partial x^2}(t,x).
\end{align*}
for $f(t,x)$ which is bounded and continuous together with its first
derivatives in $t$ and $x$ and its second derivative in $x.$ The
following transformation was used
$$\widetilde{\eta}_t=\exp\left\{-\int_0^tg(s)ds\right\}\eta_t,\quad t\in[0,T].$$
After such a transformation, we give the details in this section that $\widetilde{\eta}$ has compensating operator given by
\begin{align*}
\mathfrak{A}^{\widetilde{\eta}}f(t,x)&=\frac{\partial f}{\partial
t}(t,x)+\frac{1}{2}\frac{\partial^2 G_0}{\partial
z^2}(t,\phi_0(t);z_0(t))\cdot\frac{\partial^2 f}{\partial
x^2}(t,x)\exp\left\{-2\int_0^tg(s)ds\right\}
\end{align*}
for the same class of functions $f.$

The generating operator $A^{\eta}_t$ of process $\eta$ is
$A^{\eta}_tf(x)=\mathfrak{A}^{\eta}f(t,x)$ for $f(t,x)=f(x).$ Let us
assume the starting position of process $\eta$ is $\eta_s=x.$ From
definitions of compensating operator and generating operator, we
have the following two equalities:
\begin{align*}
P^{s,t}_{\eta}f(x)-f(x)&=\int_s^tP^{s,v}_{\eta}A^{\eta}_vf(x)dv,\\
P^{s,t}_{\eta}f(t,\cdot)(x)-f(s,x)&=\int_s^tP^{s,v}_{\eta}\mathfrak{A}^{\eta}f(v,\cdot)(x)dv,
\end{align*}
for suitable $f(x)$ and $f(t,x),$ where $P^{s,t}_{\eta}$ is the
multiplicative family of operators of Markov process $\eta$ given by
$P^{s,t}_{\eta}f(x)=\mathbb{E}^{\eta}_{s,x}f(\eta_t).$ The following two
connections between multiplicative families of $\eta$ and
$\widetilde{\eta}$ are easily derived:
\begin{align*}
P^{s,t}_{\eta}f(x)&=\mathbb{E}^{\eta}_{s,x}f(\eta_t)=\mathbb{E}^{\widetilde{\eta}}_{s,x\exp\{-\int_0^sg(u)du\}}f\left(\widetilde{\eta}_t\exp\{\int_0^tg(u)du\}\right)\\
&=P^{s,t}_{\widetilde{\eta}}G(t,\cdot)\left(x\exp\{-\int_0^sg(u)du\}\right),\quad G(t,x)=f\left(x\exp\{\int_0^tg(u)du\}\right),\\
P^{s,t}_{\widetilde{\eta}}f(x)&=\mathbb{E}^{\widetilde{\eta}}_{s,x}f(\widetilde{\eta}_t)=\mathbb{E}^{\eta}_{s,x\exp\{\int_0^sg(u)du\}}f\left(\eta_t\exp\{-\int_0^tg(u)du\}\right)\\
&=P^{s,t}_{\eta}F(t,\cdot)\left(x\exp\{\int_0^sg(u)du\}\right),\quad
F(t,x)=f\left(x\exp\{-\int_0^tg(u)du\}\right).
\end{align*}
Now we look for $A_t^{\widetilde{\eta}}$ in the following way.
\begin{align*}
P^{s,t}_{\widetilde{\eta}}f(x)-f(x)&=P^{s,t}_{\eta}F(t,\cdot)\left(x\exp\{\int_0^sg(u)du\}\right)-F\left(s,x\exp\{\int_0^sg(u)du\}\right)\\
&=\int_s^tP^{s,v}_{\eta}\mathfrak{A}^{\eta}F(v,\cdot)\left(x\exp\{\int_0^sg(u)du\}\right)dv\\
&=\int_s^tP^{s,v}_{\eta}g(v,\cdot)\left(x\exp\{\int_0^sg(u)du\}\right)dv,\text{ set }h(v,x)=\mathfrak{A}^{\eta}F(v,\cdot)(x)\\
&=\int_s^tP^{s,v}_{\widetilde{\eta}}\widetilde{h}(v,\cdot)(x)dv,\text{
where }\widetilde{h}(v,x)=h\left(v,x\exp\{\int_0^vg(u)du\}\right).
\end{align*}
It is straightforward to compute
$$h(v,x)=\frac{1}{2}\frac{\partial^2 G_0}{\partial
z^2}(v,\phi_0(v);z_0(v))\cdot
f''\left(x\exp\{-\int_0^vg(u)du\}\right)\cdot\exp\left\{-2\int_0^vg(u)du\right\},$$
thus
\begin{align*}
A^{\widetilde{\eta}}_tg(x)&=\widetilde{h}(t,x)=h\left(t,x\exp\{\int_0^tg(u)du\}\right)\\
&=\frac{1}{2}\frac{\partial^2 G_0}{\partial
z^2}(t,\phi_0(t);z_0(t))\cdot
f''\left(x\right)\cdot\exp\left\{-2\int_0^vg(u)du\right\}.
\end{align*}

\subsection{Proof of Lemma \ref{lemma-01}}
\begin{proof}
The same conclusion with a continuous and bounded functional $F$ was given in \cite{Wentzell-LD-Markov-Processes-1986} without a proof. For completeness, we first present the proof for continuous and bounded $F$ and then extend the argument to include the continuous functional $F$ which is only bounded above.

We take
$$\gamma=\left[F(\phi_0)-S(\phi_0)-\max_{\|(\phi_0-\phi\|\geq\delta}[F(\phi)-S(\phi)]\right]/2.$$
Here the $\max_{\|(\phi_0-\phi\|\geq\delta}[F(\phi)-S(\phi)]$ is reached at some point $\phi_1\in X.$ To see this, we note that
$$\sup_{\|(\phi_0-\phi\|\geq\delta}[F(\phi)-S(\phi)]=\sup_{\phi\in A}[F(\phi)-S(\phi)]$$
where $$A=\left\{\phi\in D_0[0,T]: \|(\phi_0-\phi\|\geq\delta\text{ and }S(\phi)\leq \sup_{x\in X} F(x)-[F(\tilde{\phi})-S(\tilde{\phi})]\right\}$$
given a fixed $\tilde{\phi}$ such that $\|\phi_0-\tilde{\phi}\|\geq\delta$ and $\left|F(\tilde{\phi})-S(\tilde{\phi})\right|<\infty.$ The compactness of $A$ implies that $\sup_{\phi\in A}[F(\phi)-S(\phi)]$ attains its maximum at some $\phi_1.$

We first assume that $F$ is bounded, thus $F$ can be split into finitely many parts as
$$F(x)\in\bigcup_{i=-k}^k[i\gamma/4,(i+1)\gamma/4].$$
It then follows
\begin{align*}
&\int_{\left\{\|\xi^{\epsilon}-\phi_0\|\geq\delta\right\}}\exp\left\{F(\xi^{\epsilon})/\epsilon\right\}d\,\mathbb{P}^{\epsilon}\\
&\leq\sum_{i=-k}^k\int_{\left\{\|\xi^{\epsilon}-\phi_0\|\geq\delta,F(\xi^{\epsilon})\in[i\gamma/4,(i+1)\gamma/4]\right\}}\exp\left\{F(\xi^{\epsilon})/\epsilon\right\}d\,\mathbb{P}^{\epsilon}\\
&\leq \sum_{i=-k}^k\exp\left\{(i+1)\gamma/(4\epsilon)\right\}\cdot \mathbb{P}^{\epsilon}\left\{\|\xi^{\epsilon}-\phi_0\|\geq\delta,F(\xi^{\epsilon})\in[i\gamma/4,(i+1)\gamma/4]\right\}.
\end{align*}
On each set $\left\{\|\xi^{\epsilon}-\phi_0\|\geq\delta,F(\xi^{\epsilon})\in[i\gamma/4,(i+1)\gamma/4]\right\},$
$$S(\xi^{\epsilon})\geq F(\xi^{\epsilon})-[F(\phi_1)-S(\phi_1)]\geq i\gamma/4-[F(\phi_1)-S(\phi_1)],$$
then according to large deviation principle, for small enough $\epsilon,$
\begin{align*}
&\mathbb{P}^{\epsilon}\left\{\|\xi^{\epsilon}-\phi_0\|\geq\delta,F(\xi^{\epsilon})\in[i\gamma/4,(i+1)\gamma/4]\right\}\\
&\leq \exp\left\{-\inf_{\left\{\phi:\left\{\|\xi^{\epsilon}-\phi_0\|\geq\delta,F(\phi)\in[i\gamma/4,(i+1)\gamma/4]\right\}\right\}}S(\phi)/\epsilon+\gamma/(4\epsilon)\right\}\\
&\leq\exp\left\{[F(\phi_1)-S(\phi_1)]/\epsilon-i\gamma/(4\epsilon)+\gamma/(4\epsilon)\right\}.
\end{align*}
Therefore,
\begin{align*}
&\int_{\left\{\|\xi^{\epsilon}-\phi_0\|\geq\delta\right\}}\exp\left\{F(\xi^{\epsilon})/\epsilon\right\}d\,\mathbb{P}^{\epsilon}\\
&\leq \sum_{i=-k}^k\exp\left\{(i+1)\gamma/(4\epsilon)\right\}\cdot \exp\left\{[F(\phi_1)-S(\phi_1)]/\epsilon-i\gamma/(4\epsilon)+\gamma/(4\epsilon)\right\}\\
&=\sum_{i=-k}^k\exp\left\{\frac{1}{\epsilon}\left[F(\phi_0)-S(\phi_0)-\frac{3\gamma}{2}\right]\right\}=o\left(\exp\left\{[F(\phi_0)-S(\phi_0)-\gamma]/\epsilon\right\}\right).
\end{align*}

Now we assume $F$ to be bounded above, i.e., $M:=\sup_{x\in D_0[0,T]} F(x)<\infty.$ Let us define a sequence of truncated functionals $G^N$ as follows
\begin{align*}
G^N(x)=\begin{cases}F(x)& \text{ if }F(x)\geq -N,\\
-N& \text{ if }F(x)< -N.
\end{cases}
\end{align*}
Taking into account the fact that $F\leq G,$ it is clear that the proof is complete if we can prove $G^N-S$ attains its maximum uniquely at $\phi_0$ for large $N$ given the condition that $F-S$ attains its maximum uniquely at $\phi_0.$ We will argue this by contradiction. Suppose for every large $N,$ there is a point $\phi_N\in X$ different from $\phi_0$ such that
\begin{align}\label{eq:near-end}
G^N(\phi_N)-S(\phi_N)\geq G^N(\phi_0)-S(\phi_0)=F(\phi_0)-S(\phi_0)\text{ for large }N.
\end{align}
Noticing that
$$\sup_{\phi\in X}[G^N(\phi)-S(\phi)]=\sup_{\phi\in B}[G^N(\phi)-S(\phi)]$$
for a compact set $B=\left\{\phi\in X: S(\phi)\leq M-[F(\phi_0)-S(\phi_0)]\right\},$ we have
$\{\phi_N\}\subseteq B.$ Then there must be a limiting point $\widehat{\phi}$ of a subsequence of $\{\phi_N\}$ (we still denote the subsequence as $\phi_N$),
$$\lim_{N\rightarrow\infty}\phi_N=\widehat{\phi}.$$
From (\ref{eq:near-end}), it follows
$$F(\phi_N)\leq -N$$
because of the uniqueness of the maximizer of $F-S.$ Now we take the limit
$$F(\widehat{\phi})=\lim_{N\rightarrow\infty}F(\phi_N)\leq \lim_{N\rightarrow\infty}-N=-\infty,$$
which is impossible.
\end{proof}

\subsection{On a variational problem}\label{subsec:variational}
In this section, we show the existence and uniqueness of the variational problem of Example 1 in Section \ref{subsec:examples} (note that $T=1$ in Example 1)
$$\max_{\phi\in C_0^1[0,T]}\int_0^T\left[\phi(t)-\phi(t)^2-\left(\phi'(t)\ln\left(\phi'(t)+\sqrt{\phi'(t)^2+1}\right)+1-\sqrt{\phi'(t)^2+1}\right)\right]dt.$$

The proof of uniqueness of our problem is standard and is included
in Section \ref{Uniqueness}. For existence, many references deal
with problems having two fixed boundaries and satisfying coercivity
assumption (see (\ref{coercivity}) in Section \ref{Existence}), our
problem fails to meet these two requirements. A proof for the
existence is given in Section \ref{Existence} mainly based on nice
properties of the functional $F(\phi)-S(\phi)$ and the analysis on
absolutely continuous function space.

\subsubsection{Uniqueness}\label{Uniqueness} For short, let us define,
\begin{align}\label{H}
&H(u)=u\ln\left(u+\sqrt{u^2+1}\right)+1-\sqrt{u^2+1},\\
&v(\phi)=\int_0^T\left[\phi(t)-\phi(t)^2-H(\phi'(t))\right]dt.
\end{align}
Let $f(x,y)=H(y)+x^2-x,$ then the variational problem becomes
\begin{align}\label{directly}
\alpha=\max_{\phi\in
C_0^1[0,T]}\int_0^T\left[\phi(t)-\phi(t)^2-H(\phi'(t))\right]dt=-\min_{\phi\in
C_0^1[0,T]}\int_0^Tf(\phi(t),\phi'(t))dt.
\end{align}
Now suppose $\phi_1$ and
$\phi_2$ are two minimizers of problem (\ref{directly}). Let
$w(t)=[\phi_1(t)+\phi_2(t)]/2,$ then on one hand,
$\int_0^Tf(w(t),w'(t))dt\geq-\alpha;$ on the other hand, convexity
of $f$ yields
\begin{align*}
\int_0^Tf(w(t),w'(t))dt&=\int_0^Tf\left(\frac{1}{2}(\phi_1(t),\phi_1'(t))+\frac{1}{2}(\phi_2(t),\phi_2'(t))\right)dt\\
&\leq\frac{1}{2}\int_0^Tf(\phi_1(t),\phi_1'(t))dt+\frac{1}{2}\int_0^Tf(\phi_2(t),\phi_2'(t))dt=-\alpha,
\end{align*}
which indicates that $w(t)$ is also a minimizer of (\ref{directly}).
From equality
\begin{align}\label{temp-zero}
\int_0^T\left[\frac{1}{2}f(\phi_1(t),\phi_1'(t))+\frac{1}{2}f(\phi_2(t),\phi_2'(t))-f(w(t),w'(t))\right]dt=-\frac{1}{2}\alpha-\frac{1}{2}\alpha+\alpha=0
\end{align}
where the integrand of (\ref{temp-zero}) is always nonpositive (from
convexity of $f$), we have
$$\frac{1}{2}f(\phi_1(t),\phi_1'(t))+\frac{1}{2}f(\phi_2(t),\phi_2'(t))=f(w(t),w'(t)),\quad\text{ for all }t\in[0,T].$$
Rewrite above identity as follows
\begin{align}\label{temp-zero-zero}
\frac{1}{2}\phi_1^2(t)+\frac{1}{2}\phi_2^2(t)-\left(\frac{\phi_1(t)+\phi_2(t)}{2}\right)^2&=H(\frac{\phi_1'(t)+\phi_2'(t)}{2})-\left(\frac{1}{2}H(\phi_1'(t))+\frac{1}{2}H(\phi_2'(t))\right).
\end{align}
If there were a point $t_0\in[0,T]$ such that
$\phi_1(t_0)\neq\phi_2(t_0),$ then left hand side of
(\ref{temp-zero-zero}) would be strictly $<0$ (which is from strict
convexity of function $x^2$), while the right hand side is always
$\geq0$ from convexity of $H.$ This contradiction proves that
(\ref{directly}) has at most one minimizer.

\subsubsection{Existence}\label{Existence} When one deals with the existence of variational
problems, the following coercivity condition is in general assumed:
for all $p,z\in R,$
\begin{align}\label{coercivity}
H(p)-(z-z^2)\geq\alpha|p|^q-\beta,\qquad\exists\,\,\alpha>0,\beta\geq0,q>1.
\end{align}
(see Section 8.2 in \cite{Evans}, or see \cite{Tonelli} for the case
$q=2$). But this condition is not satisfied for our problem since
$\lim_{|p|\rightarrow\infty}\frac{H(p)-(z-z^2)}{|p|^q}=0$ for any
fixed $z.$ What is more, calculus of variations in references were
given in general with two fixed boundaries: $\phi(0)=A$ and
$\phi(T)=B.$ But our problem has one movable boundary $\phi(T).$

We define a space $\text{AC}_0[0,T]$ consisting of absolutely
continuous functions on $[0,T]$ vanishing at zero:
$$\text{AC}_0[0,T]=\left\{f:[0,T]\rightarrow \mathbb{R}\text{ being absolutely continuous with }f(0)=0\right\}.$$
The existence of our variational problem is solved in the following
way. We first prove the existence for $\max_{\phi\in
\text{AC}_0[0,T]}v(\phi),$ which implies that this variational problem coincides with a two fixed boundary problem
$$\mathop{\max_{\phi\in \text{AC}_0[0,T]}^{
}}_{\phi(T)=c}v(\phi), \text{ for some }c.$$
Then we show $C^1$ regularity of the
maximizer by means of two fixed boundary variational results,
which immediately implies the existence of $\max_{\phi\in
C_0^1[0,T]}v(\phi).$

\subsubsection*{Existence of $\max_{\phi\in
\text{AC}_0[0,T]}v(\phi)$}\label{existence-in-c-0-0} Obviously, we
can find some $\phi_{*}\in \text{AC}_0[0,T]$ with
$|v(\phi_{*})|<T/4$ (for instance $\phi_{*}\equiv0$). We define a
subset $\mathcal{A}$ of $\text{AC}_0[0,T],$
$$\mathcal{A}=\left\{\phi\in \text{AC}_0[0,T]:\int_0^TH(\phi'(t))dt\leq\frac{T}{4}-v(\phi_{*})\right\}.$$
Then
\begin{align}\label{compact-set}
\sup_{\phi\in \text{AC}_0[0,T]}v(\phi)=\sup_{\phi\in
\mathcal{A}}v(\phi).
\end{align}
To see (\ref{compact-set}), we notice that for any $\phi\notin
\mathcal{A},$
$$\int_0^TH(\phi'(t))dt>\frac{T}{4}-v(\phi_{*}),\text{ then }v(\phi_{*})>\frac{T}{4}-\int_0^TH(\phi'(t))dt\geq v(\phi).$$
Let us write
$$\alpha=\sup_{\phi\in \mathcal{A}}v(\phi),$$
and let $\{\phi_n(t)\}_{n\geq1}\subseteq\mathcal{A}$ be chosen such
that
\begin{align*}
\lim_{n\rightarrow\infty}v(\phi_n)=\alpha,\text{ and
}\lim_{n\rightarrow\infty}\max_{0\leq t\leq
T}|\phi_n(t)-\phi_0(t)|=0\text{ for some }\phi_0\in
\text{AC}_0[0,T].
\end{align*}
The reason why we can choose such a sequence $\phi_n$ is from the
fact that $\mathcal{A}$ is compact in $\text{AC}_0[0,T]$ according
to the following Lemma \ref{lower-semi-con-compact} (after passing
to a subsequence). We now show $v(\phi_0)=\alpha.$ In fact,
$v(\phi_0)\leq\alpha$ is trivial. Lower semi-continuity of
$-v(\cdot)$ in Lemma \ref{lower-semi-con-compact} gives
$$v(\phi_0)\geq\limsup_{n\rightarrow\infty}v(\phi_n)=\alpha.$$
So (\ref{compact-set}) can be rewritten as
\begin{align*}
\max_{\phi\in \text{AC}_0[0,T]}v(\phi)=\max_{\phi\in
\mathcal{A}}v(\phi),
\end{align*}
which proves the existence of $\max_{\phi\in
\text{AC}_0[0,T]}v(\phi).$

\begin{lemma}\label{lower-semi-con-compact}
$\mathcal{A}$ defined above is compact in $\text{AC}_0[0,T]$ and
$-v(\phi)$ is lower semi-continuous in $\text{AC}_0[0,T]$ in uniform
topology.
\end{lemma}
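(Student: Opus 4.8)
The plan is to exploit two analytic features of the integrand $H$. First, $H(u)=u\,\operatorname{arcsinh}(u)+1-\sqrt{u^2+1}$ is convex and nonnegative (vanishing only at $u=0$, as noted in Example~1), and it is \emph{superlinear}: since $\operatorname{arcsinh}(u)\sim\operatorname{sgn}(u)\ln(2|u|)$ as $|u|\to\infty$, one has $H(u)/|u|\to+\infty$ at both $\pm\infty$. Superlinearity together with $H\ge 0$ is precisely the de la Vall\'ee Poussin criterion, so any family of derivatives on which $J(\phi):=\int_0^TH(\phi'(t))\,dt$ stays bounded is uniformly integrable in $L^1[0,T]$, hence, by the Dunford--Pettis theorem, relatively weakly compact in $L^1[0,T]$. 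Convexity of $H$ will in turn provide lower semi-continuity of $J$ along weakly $L^1$-convergent derivatives, by Tonelli's classical theorem.

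First I would prove that $-v$ is lower semi-continuous. Writing $-v(\phi)=\int_0^T(\phi(t)^2-\phi(t))\,dt+J(\phi)$, the first term is continuous under uniform convergence, so it suffices to show that $J$ is lower semi-continuous in the uniform topology. Let $\phi_n\to\phi$ uniformly; we may assume $\liminf_nJ(\phi_n)=L<\infty$ and pass to a subsequence with $J(\phi_n)\to L$. The bound on $J(\phi_n)$ yields uniform integrability of $\{\phi_n'\}$, so $\phi_{n}'\rightharpoonup\psi$ weakly in $L^1$ along a further subsequence. Since $\phi_n(t)=\int_0^t\phi_n'(s)\,ds$ and $\phi_n\to\phi$ uniformly, the weak limit satisfies $\phi(t)=\int_0^t\psi(s)\,ds$, so $\phi\in\text{AC}_0[0,T]$ with $\phi'=\psi$ almost everywhere; convexity of $H$ then gives $J(\phi)\le\liminf_kJ(\phi_{n_k})=L$, which is the desired inequality.

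Next I would establish compactness of $\mathcal{A}$. By definition every $\phi\in\mathcal{A}$ satisfies $J(\phi)\le T/4-v(\phi_*)=:C$, so the derivatives $\{\phi':\phi\in\mathcal{A}\}$ are uniformly integrable. Uniform integrability yields a uniform bound on $\int_0^T|\phi'|$ and the equicontinuity estimate $|\phi(t)-\phi(s)|\le\int_{[s,t]}|\phi'|$, which together with $\phi(0)=0$ show that $\mathcal{A}$ is uniformly bounded and equicontinuous; Arzel\`a--Ascoli then makes it relatively compact in $C[0,T]$ under the uniform norm. Closedness follows from the previous paragraph: if $\phi_n\in\mathcal{A}$ and $\phi_n\to\phi_0$ uniformly, the weak-limit argument shows $\phi_0\in\text{AC}_0[0,T]$, and lower semi-continuity of $J$ gives $J(\phi_0)\le\liminf_nJ(\phi_n)\le C$, so $\phi_0\in\mathcal{A}$. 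Hence $\mathcal{A}$ is sequentially compact, i.e. compact.

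The main obstacle is the interchange between uniform convergence of $\{\phi_n\}$ and weak $L^1$ convergence of the derivatives $\{\phi_n'\}$: one must extract a weakly convergent subsequence of derivatives via de la Vall\'ee Poussin and Dunford--Pettis, correctly identify its weak limit as the derivative of the uniform limit, and only then invoke convexity to pass the inequality through $J$. Everything else---the continuity of the $\phi^2-\phi$ term, the Arzel\`a--Ascoli step, and the verification that the limit lies in $\text{AC}_0[0,T]$---is routine once this weak-compactness and lower-semicontinuity package is in place.
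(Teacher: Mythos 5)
Your proof is correct, and for the key lower semi-continuity step it takes a genuinely different route from the paper, even though both arguments rest on the same two properties of $H$ (superlinear growth and convexity). Where you extract a weakly $L^1$-convergent subsequence of derivatives via de la Vall\'ee Poussin and Dunford--Pettis, identify the weak limit as $\phi'$ through $\phi_n(t)=\int_0^t\phi_n'(s)\,ds$, and then invoke Tonelli's weak lower semi-continuity theorem for convex integrands, the paper never leaves the uniform topology: it proves lower semi-continuity of $\int_0^TH(\phi')\,dt$ by hand, applying Jensen's inequality to the difference quotients $\bigl(\phi_n(t_{i+1})-\phi_n(t_i)\bigr)/(t_{i+1}-t_i)$ over a fixed partition, passing to the uniform limit in these quotients, refining the partition so that the piecewise-constant functions $\Psi_m$ converge a.e.\ to $\phi_\infty'$, and finishing with continuity of $H$ and Fatou's lemma. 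The superlinearity of $H$ enters the paper's proof only through the absolute equicontinuity estimate (its inequality (\ref{nice-property-H})), which plays the role your uniform integrability plays; the Arzel\`a--Ascoli and closedness steps are essentially identical in the two arguments. Your version is shorter once the standard functional-analytic results are taken as known and makes the structural reason for lower semi-continuity (convexity $\Rightarrow$ weak l.s.c.) transparent; the paper's version is self-contained and entirely elementary, avoiding weak topologies and the Dunford--Pettis machinery altogether.
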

\begin{proof} We will
finish the proof in several steps. First we show $\mathcal{A}$ is an
absolutely euqicontinuous family of functions: for any $\epsilon>0,$
there is $\delta(\epsilon)>0$ such that whenever finitely many
non-overlapping intervals $\sum_i(t_i-s_i)\leq \delta,$ then
\begin{align}\label{equicontinuous}
\sum_i|\phi(t_i)-\phi(s_i)|<\epsilon,\qquad\forall\,\,\phi\in\mathcal{A}.
\end{align}
To see (\ref{equicontinuous}), first we have a nice property for
$H:$
\begin{align}\label{nice-property-H}
H(p)\geq|p|\cdot\ln\left(|p|+\sqrt{p^2+1}\right)+1-\sqrt{2}|p|-\sqrt{2},\qquad\text{
for all }p\in \mathbb{R}.
\end{align}
Then $\lim_{|p|\rightarrow\infty}H(p)/|p|=\infty,$ so there is some
$P(\epsilon)>0,$ such that when $|p|>P,$
$$H(p)/|p|\geq2\left[\frac{1}{4}-v(\phi_{*})\right]/{\epsilon}.$$
Let $\delta(\epsilon)=\epsilon/(2P),$ then for all
$\phi\in\mathcal{A},$
\begin{align*}
\frac{1}{4}-v(\phi_{*})&\geq\int_0^TH(\phi'(t))dt\geq\sum_i\int_{s_i}^{t_i}H(\phi'(t))dt\geq\sum_i\int_{s_i}^{t_i}\frac{H(\phi'(t))}{|\phi'(t)|}|\phi'(t)|1_{\{|\phi'(t)|>P\}}(t)dt\\
&\geq\sum_i\int_{s_i}^{t_i}|\phi'(t)|1_{\{|\phi'(t)|>P\}}(t)dt\cdot2\left[\frac{1}{4}-v(\phi_{*})\right]/{\epsilon},
\end{align*}
so
\begin{align*}
&\epsilon/2\geq\sum_i\int_{s_i}^{t_i}|\phi'(t)|1_{\{|\phi'(t)|>P\}}(t)dt=\sum_i\int_{s_i}^{t_i}|\phi'(t)|dt-\sum_i\int_{s_i}^{t_i}|\phi'(t)|1_{\{|\phi'(t)|\leq
P\}}(t)dt,\\
\Rightarrow\,\,\,&\sum_i|\phi(t_i)-\phi(s_i)|\leq\sum_i\int_{s_i}^{t_i}|\phi'(t)|dt\leq\epsilon/2+\sum_i\int_{s_i}^{t_i}|\phi'(t)|1_{\{|\phi'(t)|\leq
P\}}(t)dt\\
&\leq\epsilon/2+\epsilon/2=\epsilon.
\end{align*}

The second step is to prove lower semi-continuity of $-v(\cdot).$
Let $\phi_n\in \text{AC}_0[0,T]$ be a family of absolutely
continuous functions such that $\max_{0\leq t\leq
T}|\phi_n(t)-\phi_{\infty}(t)|\rightarrow0$ as $n\rightarrow\infty.$
It turns out that $\phi_{\infty}$ is also absolutely continuous.
More precisely, according to absolute equicontinuity
(\ref{equicontinuous}), for any $\epsilon>0,$  there is
$\delta(\epsilon)>0$ such that if $\sum_i(t_i-s_i)<\delta,$ then
$\sup_n\sum_i|\phi_n(t_i)-\phi_n(s_i)|<\epsilon.$ Sending
$n\rightarrow\infty$ we get
$\sum_i|\phi_{\infty}(t_i)-\phi_{\infty}(s_i)|<\epsilon,$ which
proves the absolute continuity of $\phi_{\infty}.$ Now we show the
lower semi-continuity of $\int_0^TH(\phi(t))dt.$ Let
$0=t_0<t_1\cdots<t_k=T,$ Jensen's inequality implies
\begin{align*}
&\liminf_{n\rightarrow\infty}\int_0^TH(\phi_n'(t))dt=\liminf_{n\rightarrow\infty}\sum_{i=0}^{k-1}\int_{t_i}^{t_{i+1}}H(\phi_n'(t))dt\\
&\geq\liminf_{n\rightarrow\infty}\sum_{i=0}^{k-1}(t_{i+1}-t_i)H\left(\frac{\phi_n(t_{i+1})-\phi_n(t_{i})}{t_{i+1}-t_{i}}\right)=\sum_{i=0}^{k-1}(t_{i+1}-t_i)H\left(\frac{\phi_{\infty}(t_{i+1})-\phi_{\infty}(t_{i})}{t_{i+1}-t_{i}}\right)\\
&=\sum_{i=0}^{k-1}\int_{t_i}^{t_{i+1}}H\left(\Psi(t)\right)dt,\quad\text{where }\Psi(t)=\frac{\phi_{\infty}(t_{i+1})-\phi_{\infty}(t_{i})}{t_{i+1}-t_{i}}\text{ for }t_i\leq t<t_{i+1}\\
&=\int_{0}^{T}H\left(\Psi(t)\right)dt.
\end{align*}
Now let a sequence $\triangle_m$ of partitions be infinitely small,
then the corresponding functions $\Psi_m(t)$ converge to
$\phi_{\infty}'(t)$ almost everywhere (because of absolute
continuity of $\phi_{\infty}$). Using continuity of $H$ and Fatou's
lemma we get
$$\int_{0}^{T}H\left(\phi_{\infty}'(t)\right)dt\leq\liminf_{n\rightarrow\infty}\int_0^TH(\phi_n'(t))dt,$$
which gives us the lower semi-continuity of
$\int_{0}^{T}H(\phi(t))dt.$ Then the lower semi-continuity of
$-v(\cdot)$ is from lower semi-continuity of
$\int_{0}^{T}H(\phi(t))dt.$

The last step will present the compactness of $\mathcal{A}$ in
$\text{AC}_0[0,T].$ Lower semi-continuity of
$\int_{0}^{T}H(\phi(t))dt$ shows $\mathcal{A}$ is closed in
$\text{AC}_0[0,T].$ What's more, the equicontinuity in step one and
the fact all functions in $\mathcal{A}$ have zero initial value
imply that $\mathcal{A}$ is pre-compact in $C_0[0,T],$ thus
$\mathcal{A}$ is compact in $\text{AC}_0[0,T].$
\end{proof}

\subsubsection*{$C^1$ regularity of a maximizer of $\max_{\phi\in
\text{AC}_0[0,T]}v(\phi)$}\label{regularity-in-c-1}
Let us consider two fixed boundaries problem as follows
\begin{align*}
g(c):=\mathop{\max_{\phi\in \text{AC}_0[0,T]}^{
}}_{\phi(T)=c}v(\phi).
\end{align*}
First we note that $g(c)$ is well defined because of the existence
of a maximizer of $v(\phi)$ under restrictions $\phi\in
\text{AC}_0[0,T]$ and $\phi(T)=c.$ Clarke and Vinter in their paper
\cite{Clarke-Vinter} showed several powerful regularity theorems
under pretty mild hypotheses by using nonsmooth analysis.

More precisely, Clarke and Vinter in \cite{Clarke-Vinter} considered the basic problem in the calculus of variation, which is to minimize
$$J(\phi):=\int_0^T-L(\phi(t),\phi'(t))dt$$
over the class of absolutely continuous functions $\phi$ having two fixed boundaries $\phi(0)=A$ and $\phi(T)=B.$ Assuming $L$ satisfies suitable conditions, they proved that if there is a $\phi(t)$ solving the variational problem, then at every point $t\in[0,T],$ the function $\phi$ is $C^{\infty}$ in a neighborhood of $t,$ see Theorem 2.1 and Corollary 3.1 in \cite{Clarke-Vinter}. Our functional
$\phi(t)-\phi(t)^2-H(\phi'(t))$ has nice properties which make it
satisfy all the hypotheses in \cite{Clarke-Vinter}, so we immediately deduce the $C^1$
regularity (actually $C^{\infty}$ regularity) of the maximizer of $g(c).$

Now, according to Section \ref{existence-in-c-0-0},
\begin{align*}
\max_{\phi\in \text{AC}_0[0,T]}v(\phi)=\mathop{\max_{\phi\in
\text{AC}_0[0,T]}^{ }}_{\phi(T)=c}v(\phi)\,\,\text{ for some
(possibly not unique) }c\in \mathbb{R}.
\end{align*}
because of the existence of $\max_{\phi\in \text{AC}_0[0,T]}v(\phi).$

\section*{Acknowledgment}
The author wishes to thank Professor Alexander Wentzell for his guidance and suggestions on this work, and Portuguese Science Foundation project (PTDC/MAT/120354/2010) for the support.

\end{document}